\newcommand\keywordsname{Key words}
\DeclareMathOperator*{\argmin}{arg\,min}
\newcommand{\R}{\mathbb{R}}
\newtheorem{theorem}{Theorem}[section]
\newtheorem{lemma}{Lemma}[section]
\newtheorem{remark}{Remark}[section]
\DeclareMathOperator*{\prox}{prox}
\DeclareMathOperator*{\dom}{dom}
\newcommand{\mtrx}[1]{\mathsf{#1}}
\newcommand{\mA}{\mtrx{A}}
\newcommand{\mB}{\mtrx{B}}
\newcommand{\mD}{\mtrx{D}}
\newcommand{\mI}{\mtrx{\mathrm{I}}}
\newcommand{\mP}{\mtrx{P}}
\newcommand{\mQ}{\mtrx{Q}}
\newcommand{\vect}[1]{\bm{#1}}
\newcommand{\va}{\vect{a}}
\newcommand{\vb}{\vect{b}}
\newcommand{\vu}{\vect{u}}
\newcommand{\vv}{\vect{v}}
\newcommand{\vx}{\vect{x}}
\newcommand{\vy}{\vect{y}}
\newcommand{\vz}{\vect{z}}
\newcommand{\vw}{\vect{w}}
\newcommand{\vzero}{\vect{0}}
\newcommand{\N}{\mathbb{N}}
\newcommand{\la}{\langle}
\newcommand{\ra}{\rangle}
\title{Signal and Image Recovery with Scale and Signed Permutation Invariant Sparsity-Promoting Functions}
\author[1]{Jianqing Jia$^\ast$}
\author[2]{Ashley Prater-Bennette}
\author[3]{Lixin Shen$^\ast$}
\affil[1]{School of Data Science and Society, University of North Carolina at Chapel Hill, Chapel
Hill, NC 27599, USA}
\affil[2]{Air Force Research Laboratory, Rome, NY 13441, USA}
\affil[3]{Department of Mathematics, Syracuse University, Syracuse, NY 13244, USA}
\def\blfootnote{\gdef\@thefnmark{}\@footnotetext}
\newenvironment{@abssec}[1]{
     \if@twocolumn
       \section*{#1}
     \else
       \vspace{.05in}\footnotesize
       \parindent .0in
         {\upshape\bfseries #1. }\ignorespaces
     \fi}
     {\if@twocolumn\else\par\vspace{.1in}\fi}
\newenvironment{keywords}{\begin{@abssec}{\keywordsname}}{\end{@abssec}}
\date{}  
\begin{document}

\maketitle

\begin{abstract}
Sparse signal recovery has been a cornerstone of advancements in data processing and imaging. Recently, the squared ratio of $\ell_1$ to $\ell_2$ norms, $(\ell_1/\ell_2)^2$, has been introduced as a sparsity-prompting function, showing superior performance compared to traditional $\ell_1$ minimization, particularly in challenging scenarios with high coherence and dynamic range.  This paper explores the integration of the proximity operator of  $(\ell_1/\ell_2)^2$ {and $\ell_1/\ell_2$} into efficient optimization frameworks, including the Accelerated Proximal Gradient (APG) and Alternating Direction Method of Multipliers (ADMM). We rigorously analyze the convergence properties of these algorithms and demonstrate their effectiveness in compressed sensing and image restoration applications. Numerical experiments highlight the advantages of our proposed methods in terms of recovery accuracy and computational efficiency, particularly under noise and high-coherence conditions. 
 \end{abstract}

\begin{keywords}{Sparse signal recovery, compressed sensing, image restoration, $(\ell_1/\ell_2)^2$, optimization algorithms}
\end{keywords}
\blfootnote{$^\ast$Equal Contribution. Emails: \texttt{jqjia@unc.edu}, \texttt{ashley.prater-bennette@us.af.mil}, \texttt{lshen03@syr.edu}. Submitted on Dec 27, 2024; accepted on July 19, 2025.}

\normalsize
\section{Introduction}
In the era of big data, sparse learning has emerged as a transformative paradigm, enabling efficient representation and processing of inherently sparse datasets. Sparse modeling techniques have proven particularly valuable in fields such as signal processing, compressed sensing, and image restoration, where the goal is often to recover high-dimensional signals with minimal active components.  

Among the array of sparsity-promoting functions, the ratio of $\ell_1$ to $\ell_2$ norms, specifically its squared form $(\ell_1/\ell_2)^2$, has gained attention for its robustness in handling high-coherence and dynamic range scenarios. Unlike traditional minimization $\ell_1$, this approach dynamically balances sparsity and energy, resulting in superior recovery performance in complex environments. Despite its potential, integrating this function into an optimization framework has been non-trivial, primarily due to its nonconvexity and computational challenges. 

This paper builds on the foundation work in \cite{jia2024computing,jia2025sparse}, where the proximity operator of $(\ell_1/\ell_2)^2$ {and $\ell_1/\ell_2$ were} efficiently computed. Leveraging this breakthrough, we integrate the proximity operator into two efficient optimization methods: the Accelerated Proximal Gradient (APG) and the Alternating Direction Method of Multipliers (ADMM). Our contributions are threefold:
\begin{itemize}
    \item[1.] We present novel formulations for incorporating $(\ell_1/\ell_2)^2$ into APG and ADMM frameworks.
    \item[2.] We rigorously analyze the convergence properties of these methods under various setting.
    \item[3.] We conduct extensive numerical experiments demonstrating the superior performance of our proposed algorithms in sparse signal recovery and image restoration tasks. 
    
\end{itemize}
The remainder of this paper is organized as follows: Section 2 provides the necessary mathematical preliminaries and background. Section 3 elaborates on the application of the proposed methods to sparse signal recovery, including algorithm details and theoretical analysis. Section 4 extends the discussion to image restoration, showcasing practical implementations and results. Finally, Section 5 concludes the paper by summarizing key findings and highlighting future research directions.

\section{Preliminaries}

All functions in this work are defined on Euclidean space $\mathbb{R}^n$. Bold lowercase letters, such as $\vx$, signify vectors, with the $j$th component represented by the corresponding lowercase letter $x_j$. The notation $\mathrm{supp}(\vx)$ denotes the support of the vector $\vx$, defined as $\mathrm{supp}(\vx)=\{k: x_k \neq 0\}$. Matrices are indicated by bold uppercase letters such as $\mA$ and $\mB$. Let $\mathcal{P}_n$ denote the set of all $n\times n$ signed permutation matrices: those matrices that have only one nonzero entry in every row or column, which is $\pm1$.

The $\ell_p$ norm of $\vx=[x_1,\ldots, x_n]^\top \in \mathbb{R}^n$ is defined as $\|\vx\|_p=(\sum_{k=1}^n |x_k|^p)^{1/p}$ for $1\le p <\infty$, $\|\vx\|_\infty=\max_{1\le k \le n} |x_k|$,  and $\|\vx\|_0$ being the number of non-zero components in $\vx$. The Frobenius norm of matrix $\mA$ is defined as $\|\mA\|_F=(\sum_{i=1}^m \sum_{j=1}^n |a_{ij}|^2)$. The standard inner product in $\mathbb{R}^n$ is denoted by $\langle \vu,\vv\rangle$, where $\vu$ and $\vv$ are vectors in $\mathbb{R}^n$. 

A function $f$ defined on $\mathbb{R}^n$ with values in $\mathbb{R} \cup \{+\infty\}$ is proper if its domain $\mathrm{dom}(f)=\{x\in \mathbb{R}^n: f(x)<+\infty\}$ is nonempty, and $f$ is lower semicontinuous if its epigraph $\{(\vx,t)\in \mathbb{R}^n \times \mathbb{R}: f(\vx)\le t\}$ is a closed set. The set of proper and lower semicontinuous functions on  $\mathbb{R}^n$ to $\mathbb{R} \cup \{+\infty\}$ is denoted by $\Gamma(\mathbb{R}^n)$. 

We focus on two important properties of functions:
\begin{itemize}
\item[1.] Scale invariance: A function $f: \mathbb{R}^n \rightarrow \mathbb{R}$  is considered scale invariant if for any $\vx\in \mathbb{R}^n$ and any positive scalar $\alpha$, it holds that: 
$$
f(\alpha \vx) = f(\vx). 
$$
This means the function value remains unchanged when its input is scaled by any positive constant.

\item[2.] Signed permutation invariance: A function $f$ is signed permutation invariant if, for all permutations $\mP \in \mathcal{P}_n$ and all vectors $\vx \in \mathbb{R}^n$, the following condition is satisfied:
$$
f(\mP\vx) = f(\vx). 
$$
\end{itemize}

Examples of these invariances include all \( \ell_p \) norms (where \( 0 \leq p \leq \infty \)) and the log-sum penalty function in \( \mathbb{R}^n \), which are signed permutation invariant but not scale invariant. Conversely, the \( \ell_0 \) norm and the effective sparsity measure \(\left(\frac{\|\cdot\|_q}{\|\cdot\|_1}\right)^{\frac{q}{1-q}}\), where \( q \) is in \( (0, \infty) \setminus \{1\} \) as discussed in \cite{lopes2013estimating}, exemplify functions that are both scale and signed permutation invariant.

In this paper, from the class of scale and signed permutation invariant functions, we focus on the following specific functions:
\begin{equation*}
h_p(\vx)=\left\{
\begin{array}{ll}
    \left(\frac{\|\vx\|_1}{\|\vx\|_2}\right)^p, & \hbox{if $\vx \neq \mathbf{0}$;} \\
    0,&{otherwise},
  \end{array}
\right.
\end{equation*}
for $p=1$ and $2$. The algorithms for computing the proximity operator of $h_2$ and $h_1$ are detailed in our previous work \cite[Algorithm 1]{jia2024computing} and \cite[Algorithm 2]{jia2024computing}, respectively. 

The proximity operator is a mathematical concept used in optimization. Given a proper lower semicontinuous function $f$ and a point $\vx$, the proximity operator of $f$ at $\vx\in \R^n$ with index $\lambda$, denoted as $\mathrm{prox}_{\lambda f}(\vx)$, is defined as:
\begin{equation}\label{def:prox}
    \mathrm{prox}_{\lambda f} (\vx) := \mathrm{arg}\min \left\{\frac{1}{2\lambda} \|\vx-\vu\|_2^2+f(\vu): \vu\in\mathbb{R}^n\right\}.
\end{equation}
In simpler terms, the proximity operator finds a point $\vu$ that minimizes the sum of the function $f$ and half the squared Euclidean distance between $\vu$ and a given point $\vx$.  The proximity operator provides a computationally efficient way to find solutions for optimization problems involving nonsmooth functions.

\section{Application: Sparse Signal Recovery}
Sparse signal recovery involves optimization problems that balance data fidelity with sparsity promotion. In this section, we focus on the following unconstrained optimization problem:
\begin{equation} \label{model:uncons}
    \min \left\{  \lambda h_p(\vx) + \frac{1}{2} \|\mA\vx - \vb\|_2^2\,:\,\vx\in \R^n \right\},
\end{equation}
where $h_p(\vx)$ promotes sparsity of $\vx$, $\lambda>0$ is a regularization parameter, and $\mA$ and $\vb$ represent the sensing matrix and measurements, respectively. Unlike constrained models that strictly enforce $\mA\vx=\vb$, the unconstrained problem~\eqref{model:uncons} accommodates both noisy and noiseless data. 

Due to the nonconvex nature of the function $h_p$, problem~\eqref{model:uncons} requires advanced optimization methods. Common frameworks for nonconvex, nonsmooth problems of the form:
\begin{equation}\label{eqn:f+g}
    \min_{\vx\in \R^n} \quad f(\vx)+g(\vx)
\end{equation}
includes Proximal Gradient method (forward-backward splitting, FBS)\cite{attouch2013convergence}, Accelerated Proximal Gradient method (APG)\cite{Beck-Teboulle:SIAMIS:09,Li-Lin:NISP:2015}, Inertial Forward-Backward (IFB), Inertial Proximal Algorithm for Nonconvex Optimization (iPiano) \cite{Ochs-Chen-Brox-Pock:SIAMIS:2014}, General Iterative Shrinkage and Thresholding (GIST) and Gradient Descent with Proximal Average (GDPA). GIST and GDPA require that $g$ should be explicitly written as a difference of two convex functions to have the convergence guarantee, and iPiano demands the convexity of $g$. These methods are not ideal for our problem \eqref{model:uncons}. Thus, in this section, we mainly explore the integration of the proximity operator of $h_p$ (for $p=1, 2$) into FBS, APG, and ADMM, analyze the convergence of these algorithms, and use numerical experiments to demonstrate the effectiveness of the methods.

\subsection{Algorithms: FBS and APG}

\textbf{Forward-backward Splitting (FBS).}
In addressing the challenges posed by the nonconvex setting, we employ the forward-backward splitting method (also known as the proximal gradient method)\cite{attouch2013convergence}. Each iteration of FBS is defined as:
\begin{equation*}
    \vx^{k+1} \in \text{prox}_{\gamma^k \lambda h_p} \left( \vx^{k} - \gamma^k \mA^{\top}(\mA\vx^k-\vb) \right),
\end{equation*}
where $\gamma^k$ represents the step size at iteration $k$, chosen adaptively within the range: 
$$
0 < \underline{\gamma} < \gamma^k < \overline{\gamma} < \frac{1}{\|\mA^{\top}\mA\|_F}.
$$ 
This step size ensures stability and effectiveness during optimization \cite{attouch2013convergence}. 

Before showing the convergence of the algorithm, let us first review the definition of the Kurdyka-\L ojasiewicz (KL) property of a function.

Let $f:\mathbb{R}^d\to (-\infty,+\infty]$ be proper and lower semicontinuous.  We note that $[f<\mu]:=\{\vx\in \mathbb{R}^n: f(\vx)<\mu\}$ and $[\eta<f<\mu]:=\{\vx\in \mathbb{R}^n: \eta<f(\vx)<\mu\}$. Let $r_0>0$ and set 
$$
\mathcal{K}(r_0):=\{\varphi \in C^0([0,r_0))\cap C^1((0,r_0)), \; \varphi(0)=0, \; \varphi\; \mbox{is concave and}\; \varphi' >0 \}. 
$$
The function $f$ satisfies the KL inequality (or has KL property) locally  at $\tilde{\vx}\in \dom \partial f$ if there exist $r_0>0$, $\varphi \in \mathcal{K}(r_0)$ and a neighborhood $U(\tilde{\vx})$ of $\tilde{\vx}$ such that
\begin{equation}\label{KL-inequality}
\varphi'(f(\vx)-f(\tilde{\vx}))\text{dist}(\mathbf{0},\partial f(\vx))\geq 1
\end{equation}
for all $\vx \in U(\tilde{\vx}) \cap [f(\tilde{\vx})<f(\vx)<f(\tilde{\vx})+r_0]$. The function $f$ has the KL property on $S$ if it does so at each point of $S$.

Since $\|\cdot\|_1^2$ and $\|\cdot\|_2^2$ are semialgebraic, so is $\lambda h_p(\vx) + \frac{1}{2} \|\mA\vx - \vb\|_2^2$. Therefore, the objective function of \eqref{model:uncons} has the KL property on its domain, see \cite{attouch2013convergence}. Also, the objective function of \eqref{model:uncons} is nonnegative and coercive. Then the sequence $\{\vx^k\}$ in the algorithm is bounded and converges to some critical point of problem \eqref{model:uncons} (\cite[Theorem 5.1]{attouch2013convergence}). Moreover, the sequence $\{\vx^k\}$ has a finite length, i.e. $\sum_{k=1}^\infty \|\vx^{k+1}-\vx^k\|_2<\infty$. 

The proximal operator $\prox_{\gamma^k \lambda h_2}$ can be computed by \cite[Algorithm 1]{jia2024computing} and $\prox_{\gamma^k \lambda h_1}$ can be computed using the iterative algorithm presented by \cite[Algorithm 2]{jia2024computing}, corresponding to $p=2$ and $p=1$, respectively. For $p=2$, $\prox_{\gamma^k \lambda h_2}$ has a closed-form solution, making it highly efficient to compute. In contrast, for $p=1$, the computation relies on an iterative procedure, making it more computationally demanding. In this paper, we primarily focus on the $p=2$ case in the experimental section. We describe the signal recovery algorithm for problem \eqref{model:uncons} in Algorithm~\ref{alg:FBS_CS}, denoted as \textbf{H2-FBS}.

\begin{algorithm} 
\caption{\textbf{H2-FBS}: The forward-backward splitting algorithm for problem \eqref{model:uncons}} \label{alg:FBS_CS}
\begin{algorithmic}[1] 
\State \textbf{Input:} Initialize $\vx^0 \in \mathbb{R}^n$, $0<\gamma<\frac{1}{\|\mA^\top\mA\|_F}$
    \For{$k=0,1,2,\cdots$}
        \State\begin{equation*}
            \vx^{k+1}\in \text{prox}_{\gamma \lambda h_p} \left( \vx^{k} - \gamma \mA^{\top}(\mA\vx^k-\vb) \right)
        \end{equation*}
    \EndFor
\State \textbf{Output:} $\vx^\infty$
\end{algorithmic}
\end{algorithm}

\textbf{Accelerated Proximal Gradient (APG) Method.}
We further consider an accelerated proximal gradient method \cite{Li-Lin:NISP:2015} to speed up the convergence of the forward-backward splitting method. In particular, the algorithm is described in Algorithm~\ref{alg:APG_CS} and we denote the algorithm as \textbf{H2-APG}.

\begin{algorithm} 
\caption{\textbf{H2-APG}: The accelerated proximal gradient algorithm for problem \eqref{model:uncons}} \label{alg:APG_CS}
\begin{algorithmic}[1] 
\State \textbf{Input:} Initialize $\vz^1=\vx^1=\vx^0 \in \mathbb{R}^n$, $t^1=1$, $t^0=0$, $\alpha_x<1/\|\mA\|^2$, $\alpha_y<1/\|\mA\|^2$, and $F(\vx)$ is the objective function of problem \eqref{model:uncons}
    \For{$k=1,2,\cdots$}
        \State \begin{eqnarray}
    \vy^k & = & \vx^k + \frac{t^{k-1}}{t^k}(\vz^k - \vx^k) + \frac{t^{k-1}-1}{t^k}(\vx^k - \vx^{k-1})\notag\\
    \vz^{k+1} & = & \text{prox}_{\alpha_y \lambda h_p}\left(\vy^{k} - \alpha_y \mA^{\top}(\mA\vy^k-\vb)\right)\notag\\
    \vv^{k+1} & = & \text{prox}_{\alpha_x \lambda h_p}\left(\vx^{k} - \alpha_x \mA^{\top}(\mA\vx^k-\vb)\right)\notag\\
    t^{k+1} & = & \frac{\sqrt{4(t^{k})^2+1}+1}{2}\notag\\
    \vx^{k+1} & = & \left\{
                 \begin{array}{ll}
                   \vz^{k+1}, & \hbox{if $F(\vz^{k+1})\le F(\vv^{k+1})$;} \\
                   \vx^{k+1}, & \hbox{otherwise.} 
                 \end{array}
               \right.\notag
\end{eqnarray}
    \EndFor
\State \textbf{Output:} $\vx^\infty$
\end{algorithmic}
\end{algorithm}
The algorithm performs well when $\alpha_x$ and $\alpha_y$ are both set to the same value, denoted as $\gamma$. It was shown in \cite{Li-Lin:NISP:2015} that the algorithm converges to a critical point.  Let us recall some concepts and establish supporting results below.

A extended real-valued function $f:\mathbb{R}^n \rightarrow (-\infty, \infty]$ is said to be proper if its domain $\mathrm{dom} f:=\{\vx: f(\vx)<\infty\}$. Additionally, a proper function $f$ is said to be closed if it is lower semi-continuous. For a proper closed function $f$, the regular subdifferential $\hat{\partial} f(\bar{\vx})$  and the limiting subdifferential $\partial f(\bar{\vx})$ at $\bar{\vx}\in \mathrm{dom} f$   are given respectively as 
\begin{eqnarray*}
    \hat{\partial} f(\bar{\vx})&:=&\left\{\vv: \lim_{\vx \rightarrow \bar{\vx}}\inf_{\vx \neq \bar{\vx}} \frac{f(\vx)-f(\Bar{\vx})-\langle \vv, \vx-\Bar{\vx}\rangle}{\|\vx-\bar{\vx}\|_2}\ge 0 \right\},\\
    \partial f(\bar{\vx})&:=&\left\{\vv: \exists  \vx^{(t)} \stackrel{f}{\rightarrow} \bar{\vx}\; \mbox{and}\; \vv^{(t)}\in \hat{\partial}f(\vx^{(t)}) \; \mbox{with} \; \vv^{(t)} \rightarrow \vv \right\}, 
\end{eqnarray*}
where $\vx^{(t)} \stackrel{f}{\rightarrow} \bar{\vx}$ means $\vx^{(t)} \rightarrow \vx$ and $f(\vx^{(t)}) \rightarrow f(\vx)$. For a proper closed function $f$, we assert that $\bar{\vx}$ is a stationary point of $f$ is $0 \in \partial f(\bar{\vx})$. 

\begin{lemma}[Theorem 1 in \cite{Li-Lin:NISP:2015}]
Let $f$ be a proper function with Lipschitz continuous gradients and $g$ be proper and lower semicontinuous. For nonconvex $f$ and nonconvex nonsmooth $g$, assume that $F(\vx)=f(\vx)+ g(\vx)$ coercive. Then $\{\vx^k\}$ and $\{\vv^k\}$ generated by Algorithm~\ref{alg:APG_CS} are bounded. Let $\vx^\star$ be any accumulation point of $\{\vx^k\}$, we have $0\in \partial F(\vx^\star)$, i.e. $\vx^\star$ is a critical point.
\end{lemma}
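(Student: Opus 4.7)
The plan is to organize the argument into three logical stages: a sufficient-decrease estimate, boundedness through coercivity, and a limiting-subdifferential inclusion at any accumulation point. The monotone selection rule that picks $\vx^{k+1}\in\{\vz^{k+1},\vv^{k+1}\}$ by the smaller value of $F$ is the device that keeps standard descent-method arguments alive despite the extrapolation step producing $\vy^k$; without that rule, the nonconvex accelerated iterates need not decrease $F$.

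First I would establish sufficient decrease along $\vv^{k+1}$. Since $\vv^{k+1}$ is a proximal-gradient step from $\vx^k$ with stepsize $\alpha_x<1/L$, where $L$ is the Lipschitz constant of $\nabla f$ (in the present application $L=\|\mA\|^2$), the descent lemma for $f$ combined with the minimality in the proximal subproblem gives
$$F(\vv^{k+1})\le F(\vx^k)-\tfrac{c}{2}\|\vv^{k+1}-\vx^k\|_2^2,\qquad c:=\tfrac{1}{\alpha_x}-L>0.$$
The selection rule then forces $F(\vx^{k+1})\le F(\vv^{k+1})$, so $\{F(\vx^k)\}$ is nonincreasing. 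Coercivity of $F$ confines $\{\vx^k\}$ to the bounded sublevel set $\{\vx:F(\vx)\le F(\vx^0)\}$, and telescoping the decrease yields $\sum_k\|\vv^{k+1}-\vx^k\|_2^2<\infty$, hence $\|\vv^{k+1}-\vx^k\|_2\to 0$. Boundedness of $\{\vv^k\}$ then follows from boundedness of $\{\vx^k\}$.

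Next I would analyze any subsequential limit. Fix $\vx^{k_j}\to\vx^\star$; then also $\vv^{k_j+1}\to\vx^\star$. The optimality condition for the proximal subproblem defining $\vv^{k+1}$ reads $-\tfrac{1}{\alpha_x}(\vv^{k+1}-\vx^k)-\nabla f(\vx^k)\in\partial g(\vv^{k+1})$; adding $\nabla f(\vv^{k+1})$ to both sides shows that
$$\vepsilon^{k_j+1}:=\nabla f(\vv^{k_j+1})-\nabla f(\vx^{k_j})-\tfrac{1}{\alpha_x}(\vv^{k_j+1}-\vx^{k_j})\in\partial F(\vv^{k_j+1}),$$
and Lipschitz continuity of $\nabla f$ together with $\|\vv^{k_j+1}-\vx^{k_j}\|_2\to 0$ forces $\vepsilon^{k_j+1}\to\mathbf{0}$.

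The main obstacle is invoking closure of the limiting subdifferential, which requires not only $\vv^{k_j+1}\to\vx^\star$ but also $F(\vv^{k_j+1})\to F(\vx^\star)$. Lower semicontinuity gives $\liminf F(\vv^{k_j+1})\ge F(\vx^\star)$, so I need the reverse direction. For this I would compare $\vv^{k+1}$, as the minimizer of the prox subproblem, against the candidate point $\vx^\star$, obtaining
$$\tfrac{1}{2\alpha_x}\|\vv^{k+1}-\vx^k+\alpha_x\nabla f(\vx^k)\|_2^2+g(\vv^{k+1})\le\tfrac{1}{2\alpha_x}\|\vx^\star-\vx^k+\alpha_x\nabla f(\vx^k)\|_2^2+g(\vx^\star).$$
Passing to the limit along the subsequence $k_j$, using $\vv^{k_j+1}-\vx^{k_j}\to\mathbf{0}$, $\vx^{k_j}\to\vx^\star$, and continuity of $\nabla f$, collapses both squared norms to $\tfrac{\alpha_x}{2}\|\nabla f(\vx^\star)\|_2^2$ and yields $\limsup g(\vv^{k_j+1})\le g(\vx^\star)$. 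Combined with lower semicontinuity this delivers $g(\vv^{k_j+1})\to g(\vx^\star)$, and continuity of $f$ finishes $F(\vv^{k_j+1})\to F(\vx^\star)$. Closure of $\partial F$ then gives $0\in\partial F(\vx^\star)$, so $\vx^\star$ is a critical point.
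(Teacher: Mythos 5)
Your proof is correct, and it is essentially the argument the paper implicitly relies on: the paper itself gives no proof of this lemma, deferring entirely to the supplementary material of the cited reference, where the same route is taken — sufficient decrease for the unaccelerated prox-gradient point $\vv^{k+1}$ combined with the monotone selection rule (note the paper's algorithm statement has a typo, ``$\vx^{k+1}$'' in the second case should read ``$\vv^{k+1}$'', which you correctly interpret), coercivity to bound $\{\vx^k\}$ and $\{\vv^k\}$, and then the prox optimality condition plus convergence of function values to invoke closedness of the limiting subdifferential at the accumulation point. Your handling of the only delicate step, establishing $F(\vv^{k_j+1})\to F(\vx^\star)$ by testing the prox subproblem against $\vx^\star$, is exactly the standard device and is carried out correctly.
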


The proof can be found in the supplementary materials of \cite{Li-Lin:NISP:2015}.

\subsection{Algorithm: Alternating Direction Method of Multipliers (ADMM)}

For problems with separable structures, such as sparse signal recovery, the alternating direction method of multipliers (ADMM) is particularly effective. By introducing  an auxiliary variable $\vy$, the optimization problems~\eqref{model:uncons} can be reformulated as:
\begin{align} \label{model:cons_y}
    \min_{\vx, \vy \in \R^n} &\quad \lambda h_p(\vx) + \frac{1}{2}\|\mA\vy - \vb\|_2^2\\
    s.t. & \quad \vx = \vy \nonumber
\end{align}
We define the augmented Lagrangian of \eqref{model:cons_y}
as
\begin{equation} \label{eqn:Lagrangian_CS}
    \mathcal{L}(\vx, \vy, \vz) = \lambda h_p(\vx) + \frac{1}{2}\|\mA\vy - \vb\|_2^2 + \la \vz, \vx - \vy \ra + \frac{\rho}{2}\|\vx - \vy\|_2^2, 
\end{equation}
where $\vz$ is the Lagrangian multiplier (dual variable) and $\rho>0$ is a parameter. The ADMM scheme iterates as follows:
\begin{subequations} \label{eqn:ADMMsc}
\begin{align}
        \vx^{k+1} &\in \argmin_{\vx\in \R^n} \left(\lambda h_p(\vx)+ \frac{\rho}{2} \left\|\vx - \vy^k + \frac{\vz^k}{\rho}\right\|_2^2\right)\label{eqn:ADMMsc_a}\\
        \vy^{k+1} &= \argmin_{\vy\in \R^n} \left( \frac{1}{2}\|\mA\vy - \vb\|_2^2+ \frac{\rho}{2}\left\| \vy - \vx^{k+1} - \frac{\vz^k}{\rho}\right\|_2^2 \right)\label{eqn:ADMMsc_b}\\ 
        \vz^{k+1} &= \vz^k + \rho (\vx^{k+1} - \vy^{k+1})   \label{eqn:ADMMsc_c} 
\end{align}
\end{subequations}

The $\vy$-subproblem has a closed-form solution given by
$$
\vy^{k+1} = \left(\frac{1}{\rho}\mA^\top \mA +\mI_n \right)^{-1} \left( \frac{\mA^\top \vb}{\rho} + \frac{\vz^k}{\rho} + \vx^{k+1} \right).
$$
Since we often encounter an underdetermined system in compressed sensing, we apply the Sherman-Morrison-Woodburg theorem, leading to a more efficient $\vy$-update
$$
\vy^{k+1} = \left(\mI_n - \frac{1}{\rho} \mA^\top(\mI_m + \frac{1}{\rho}\mA\mA^\top)^{-1}\mA \right) \left( \frac{\mA^\top \vb}{\rho} + \frac{\vz^k}{\rho} + \vx^{k+1} \right)
$$
as $\mA\mA^\top$ has a smaller dimension than $\mA^\top \mA$.

The $\vx$-subproblem could be based on the proximal operator $\prox_{\frac{1}{\rho} h_p}(\vx)$. Specifically, by letting $\vx =\vy^k - \vz^k/\rho$ and $\rho = \rho/\lambda$, we observe that the $\vx$-subproblem coincides with $\prox_{\frac{1}{\rho} h_p}(\vx)$, which can be computed by \cite[Algorithm 1]{jia2024computing} and \cite[Algorithm 2]{jia2024computing}. The algorithm for solving problem \eqref{model:uncons} is described as Algorithm~\ref{alg:ADMM_CS} and we denote the algorithm as \textbf{H2-ADMM}.

\begin{algorithm} 
\caption{\textbf{H2-ADMM}: The ADMM algorithm for problem \eqref{model:uncons}} \label{alg:ADMM_CS}
\begin{algorithmic}[1] 
\State \textbf{Input:} Initialize $\vx^0 \in \mathbb{R}^n$
    \For{$k=1,2,\cdots$}
        \State \begin{eqnarray}
    \vx^{k+1} & \in & \text{prox}_{\frac{\lambda}{\rho}h_p}  \left(\vy^k-\frac{\vz^k}{\rho}\right)\notag\\
    \vy^{k+1} &=& \left(\mI_n - \frac{1}{\rho} \mA^\top(\mI_m + \frac{1}{\rho}\mA\mA^\top)\mA \right)^{-1} \left( \frac{\mA^\top \vb}{\rho} + \frac{\vz^k}{\rho} + \vx^{k+1} \right)\notag\\
    \vz^{k+1} &=& \vz^k + \rho (\vx^{k+1} - \vy^{k+1})\notag
\end{eqnarray}
    \EndFor
\State \textbf{Output:} $\vx^\infty$
\end{algorithmic}
\end{algorithm}

Following the convergence analysis in \cite{wang2019global}, we firstly prove that the augmented Lagrangian function \eqref{eqn:Lagrangian_CS} is sufficiently decreasing.

\begin{lemma} \label{lem:ADMM_CS_conv}
    Let $\{\vx^k, \vy^k, \vz^k\}$ be the sequence generated by \eqref{eqn:ADMMsc}, and then we have 
    \begin{equation}\label{eqn:ADMMscobj_decreasing}
        \mathcal{L}(\vx^{k+1}, \vy^{k+1}, \vz^{k+1})\le \mathcal{L}(\vx^{k}, \vy^{k}, \vz^{k})- \left(\frac{\rho}{2}- \frac{L^2}{\rho}\right)\|\vy^k - \vy^{k+1}\|_2^2,
    \end{equation}
    where $L:=\sigma(\mA^{\top}\mA)$ is the largest eigenvalue of $\mA^{\top}\mA$.
\end{lemma}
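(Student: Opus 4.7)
The plan is to telescope the one-step change in $\mathcal{L}$ through two intermediate points, one update block at a time:
\begin{align*}
\mathcal{L}(\vx^{k+1},\vy^{k+1},\vz^{k+1})-\mathcal{L}(\vx^k,\vy^k,\vz^k)
&=\bigl[\mathcal{L}(\vx^{k+1},\vy^k,\vz^k)-\mathcal{L}(\vx^k,\vy^k,\vz^k)\bigr]\\
&\quad+\bigl[\mathcal{L}(\vx^{k+1},\vy^{k+1},\vz^k)-\mathcal{L}(\vx^{k+1},\vy^k,\vz^k)\bigr]\\
&\quad+\bigl[\mathcal{L}(\vx^{k+1},\vy^{k+1},\vz^{k+1})-\mathcal{L}(\vx^{k+1},\vy^{k+1},\vz^k)\bigr],
\end{align*}
so that each bracket records the descent contributed by a single subproblem of \eqref{eqn:ADMMsc}.

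The first bracket is nonpositive directly from the defining optimality of $\vx^{k+1}$ in \eqref{eqn:ADMMsc_a}. For the second bracket, I use that $\vy\mapsto\mathcal{L}(\vx^{k+1},\vy,\vz^k)$ has Hessian $\mA^\top\mA+\rho\mI\succeq\rho\mI$ and is therefore $\rho$-strongly convex with minimizer $\vy^{k+1}$; hence this bracket is bounded above by $-\tfrac{\rho}{2}\|\vy^k-\vy^{k+1}\|_2^2$. For the third bracket, only the linear term $\langle\vz,\vx-\vy\rangle$ depends on $\vz$, so it equals $\langle\vz^{k+1}-\vz^k,\vx^{k+1}-\vy^{k+1}\rangle$; combined with the dual update \eqref{eqn:ADMMsc_c} this collapses to $\tfrac{1}{\rho}\|\vz^{k+1}-\vz^k\|_2^2$, the sole positive contribution.

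The main obstacle is to dominate this dual-gap term by the primal displacement $\|\vy^{k+1}-\vy^k\|_2^2$. I would extract the key identity $\vz^{k+1}=\mA^\top(\mA\vy^{k+1}-\vb)$ from the first-order optimality condition of the $\vy$-subproblem \eqref{eqn:ADMMsc_b}, namely $\mA^\top(\mA\vy^{k+1}-\vb)-\vz^k-\rho(\vx^{k+1}-\vy^{k+1})=0$, together with \eqref{eqn:ADMMsc_c}. Applying this identity at iterations $k$ and $k+1$ (which holds for $k\ge 1$, or for every $k\ge 0$ under the standard initialization $\vz^0=\mA^\top(\mA\vy^0-\vb)$) yields $\vz^{k+1}-\vz^k=\mA^\top\mA(\vy^{k+1}-\vy^k)$. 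Since the operator norm of $\mA^\top\mA$ equals $L=\sigma(\mA^\top\mA)$, this gives $\|\vz^{k+1}-\vz^k\|_2^2\le L^2\|\vy^{k+1}-\vy^k\|_2^2$.

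Summing the three bounds then delivers exactly the announced coefficient $\tfrac{\rho}{2}-\tfrac{L^2}{\rho}$ in front of $\|\vy^k-\vy^{k+1}\|_2^2$. The technical hinge is the identification of $\vz^{k+1}$ with $\nabla_{\vy}\bigl(\tfrac{1}{2}\|\mA\vy-\vb\|_2^2\bigr)$ evaluated at $\vy=\vy^{k+1}$; once this is in hand, everything else is routine use of the $\vx$- and $\vy$-optimality conditions and strong convexity of the $\vy$-step.
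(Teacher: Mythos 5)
Your proof is correct and follows essentially the same route as the paper: both arguments hinge on the identity $\vz^{k+1}=\nabla\Phi(\vy^{k+1})$ with $\Phi(\vy)=\tfrac{1}{2}\|\mA\vy-\vb\|_2^2$ (from the $\vy$-optimality condition combined with the dual update), the resulting Lipschitz bound $\|\vz^{k+1}-\vz^k\|_2\le L\|\vy^{k+1}-\vy^k\|_2$, the dual-ascent term $\tfrac{1}{\rho}\|\vz^{k+1}-\vz^k\|_2^2$, and the $\vx$-step descent from \eqref{eqn:ADMMsc_a}. The only differences are bookkeeping — you split the change into three brackets and bound the $\vy$-block via $\rho$-strong convexity of $\vy\mapsto\mathcal{L}(\vx^{k+1},\vy,\vz^k)$, whereas the paper expands $\mathcal{L}$ explicitly and invokes the first-order convexity inequality for $\Phi$ — and you are in fact more careful than the paper about the $k=0$ step, noting that $\vz^k=\nabla\Phi(\vy^k)$ requires either $k\ge 1$ or the initialization $\vz^0=\mA^\top(\mA\vy^0-\vb)$.
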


\begin{proof}
Denote $\Phi(\vy):=\frac{1}{2}\|\mA\vy - \vb\|_2^2$. We know $\Phi$ is convex, which leads to 
\begin{equation}
    \Phi(\vy^k) \ge \Phi(\vy^{k+1}) - \la \vy^{k+1} - \vy^k, \nabla\Phi(\vy^{k+1})\ra.
\end{equation}
From the optimality condition of \eqref{eqn:ADMMsc_b} we have $\nabla\Phi(\vy^{k+1})= \vz^{k+1}$ and $\nabla\Phi(\vy^{k})= \vz^{k}$. Also $\nabla\Phi$ is Lipschitz continuous with constant $L$, which implies
\begin{equation}\label{eqn:ADMMsc_gradientLip}
    \|\vz^k - \vz^{k+1}\|_2 = \|\nabla\Phi(\vy^{k}) - \nabla\Phi(\vy^{k+1})\|_2 \le L\|\vy^k - \vy^{k+1}\|_2.
\end{equation}

Now we calculate
\begin{align}
    &\mathcal{L}(\vx^{k+1}, \vy^k, \vz^k) - \mathcal{L}(\vx^{k+1}, \vy^{k+1}, \vz^{k+1})\notag\\
    =&\Phi(\vy^k)-\Phi(\vy^{k+1})+\la \vz^k, \vx^{k+1} - \vy^k \ra -\la \vz^{k+1}, \vx^{k+1} - \vy^{k+1} \ra+ \frac{\rho}{2}\|\vx^{k+1}-\vy^k\|_2^2 - \frac{\rho}{2}\|\vx^{k+1}-\vy^{k+1}\|_2^2 \notag\\
    =&\Phi(\vy^k)-\Phi(\vy^{k+1})-\la \vz^k, \vy^k - \vy^{k+1} \ra - \rho \|\vx^{k+1} - \vy^{k+1}\|_2^2+ \frac{\rho}{2}\|\vy^{k+1}-\vy^k\|_2^2 + \rho \la \vx^{k+1} -\vy^{k+1}, \vy^{k}-\vy^{k+1} \ra \notag\\
    =& \Phi(\vy^k)-\Phi(\vy^{k+1}) - \la \vz^{k+1}, \vy^k - \vy^{k+1}\ra - \rho \la \vx^{k+1} - \vy^{k+1}, \vx^{k+1}- \vy^{k}\ra\notag\\
    &+\frac{\rho}{2}\|\vy^{k+1}-\vy^k\|_2^2 + \rho \la \vx^{k+1} -\vy^{k+1}, \vy^{k}-\vy^{k+1} \ra\notag\\
    =&\Phi(\vy^k)-\Phi(\vy^{k+1}) -  \la \vz^{k+1}, \vy^k - \vy^{k+1}\ra + \frac{\rho}{2}\|\vy^{k+1}-\vy^k\|_2^2 -\frac{1}{\rho}\|\vz^{k+1} - \vz^{k}\|_2^2\notag\\
    \ge & \frac{\rho}{2}\|\vy^{k+1}-\vy^k\|_2^2-\frac{1}{\rho}\|\vz^{k+1} - \vz^{k}\|_2^2\notag\\
    \ge& \left(\frac{\rho}{2}- \frac{L^2}{\rho}\right)\|\vy^{k+1} - \vy^k\|_2^2
\end{align}
which means
$$
\mathcal{L}(\vx^{k+1}, \vy^{k+1}, \vz^{k+1})\le \mathcal{L}(\vx^{k+1}, \vy^k, \vz^k) - \left(\frac{\rho}{2}- \frac{L^2}{\rho}\right)\|\vy^{k+1} - \vy^k\|_2^2.
$$

By \eqref{eqn:ADMMsc_a} we have
$$
\mathcal{L}(\vx^{k+1},\vy^k, \vz^k)\le \mathcal{L}(\vx^{k},\vy^k, \vz^k),
$$
which leads to
$$
\mathcal{L}(\vx^{k+1}, \vy^{k+1}, \vz^{k+1})\le \mathcal{L}(\vx^{k}, \vy^{k}, \vz^{k})- \left(\frac{\rho}{2}- \frac{L^2}{\rho}\right)\|\vy^k - \vy^{k+1}\|_2^2.
$$
This completes the proof.

\end{proof}

The next result provides the subsequential convergence analysis.
\begin{theorem}
    Let $\{\vx^k, \vy^k, \vz^k\}$ be the sequence generated by \eqref{eqn:ADMMsc}. If $\{\vx^k\}$ is bounded and $\rho> \sqrt{2}L$, then $\{\vx^k, \vy^k, \vz^k\}$ is bounded and has at least one accumulation point, and any accumulation point $(\vx^\star, \vy^\star, \vz^\star)$ is a stationary point of \eqref{eqn:Lagrangian_CS}. Additionaly, $\vx^\star$ is a stationary point of $\lambda h_p(\vx) + \frac{1}{2}\|\mA\vy - \vb\|_2^2$.
\end{theorem}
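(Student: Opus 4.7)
The plan is to follow the nonconvex ADMM template of \cite{wang2019global}, using Lemma \ref{lem:ADMM_CS_conv} as the sufficient-decrease engine: under the hypothesis $\rho>\sqrt{2}L$ the coefficient $\frac{\rho}{2}-\frac{L^2}{\rho}$ is strictly positive, so $\mathcal{L}(\vx^k,\vy^k,\vz^k)$ is monotonically nonincreasing. I would first show $\mathcal{L}$ is bounded below. Substituting the $\vy$-optimality identity $\vz^k=\mA^\top(\mA\vy^k-\vb)$ into \eqref{eqn:Lagrangian_CS} and completing the square gives
\[
\mathcal{L}(\vx^k,\vy^k,\vz^k)=\lambda h_p(\vx^k)+\tfrac{1}{2}\|\mA\vx^k-\vb\|_2^2+\tfrac{1}{2}\la(\rho\mI-\mA^\top\mA)(\vx^k-\vy^k),\,\vx^k-\vy^k\ra,
\]
which is nonnegative since $h_p\ge 0$ and $\rho>\sqrt{2}L>L$. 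Monotonicity combined with this lower bound forces $\mathcal{L}^k$ to converge, and telescoping \eqref{eqn:ADMMscobj_decreasing} yields $\sum_k\|\vy^{k+1}-\vy^k\|_2^2<\infty$. Hence $\vy^{k+1}-\vy^k\to\vzero$; \eqref{eqn:ADMMsc_gradientLip} then gives $\vz^{k+1}-\vz^k\to\vzero$; and the dual update \eqref{eqn:ADMMsc_c} gives $\vx^{k+1}-\vy^{k+1}=(\vz^{k+1}-\vz^k)/\rho\to\vzero$.

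Next, since $\vx^{k+1}-\vy^{k+1}\to\vzero$ and $\{\vx^k\}$ is bounded by hypothesis, $\{\vy^k\}$ is bounded, and then $\{\vz^k\}=\{\mA^\top(\mA\vy^k-\vb)\}$ is too. Bolzano--Weierstrass extracts a subsequence $(\vx^{k_j},\vy^{k_j},\vz^{k_j})\to(\vx^\star,\vy^\star,\vz^\star)$, and passing to the limit yields $\vx^\star=\vy^\star$ and $\vz^\star=\mA^\top(\mA\vx^\star-\vb)$.

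For the stationarity conclusion, the optimality condition of the $\vx$-subproblem \eqref{eqn:ADMMsc_a} reads $-\rho(\vx^{k+1}-\vy^k)-\vz^k\in\lambda\partial h_p(\vx^{k+1})$, and along the subsequence the left-hand side converges to $-\vz^\star$. The closed-graph property of the limiting subdifferential then gives $-\vz^\star\in\lambda\partial h_p(\vx^\star)$, which combined with $\vz^\star=\mA^\top(\mA\vx^\star-\vb)$ and $\vx^\star=\vy^\star$ yields $\vzero\in\partial\mathcal{L}(\vx^\star,\vy^\star,\vz^\star)$, as well as
\[
\vzero\in\lambda\partial h_p(\vx^\star)+\mA^\top(\mA\vx^\star-\vb)=\partial F(\vx^\star),
\]
where $F(\vx):=\lambda h_p(\vx)+\tfrac{1}{2}\|\mA\vx-\vb\|_2^2$ (correcting the apparent typo in the statement).

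The main obstacle is justifying the closed-graph passage for $\partial h_p$: the limiting subdifferential requires $h_p(\vx^{k_j+1})\to h_p(\vx^\star)$, and $h_p$ fails to be upper semicontinuous at $\vzero$ because scale invariance forces $\|\vx\|_1/\|\vx\|_2\ge 1$ on nonzero vectors while $h_p(\vzero)=0$. Convergence of $\mathcal{L}^k$ together with continuity of the smooth terms forces $\lambda h_p(\vx^{k_j})$ to converge to some value, and lower semicontinuity of $h_p$ yields $h_p(\vx^\star)\le\lim_j h_p(\vx^{k_j+1})$; the matching upper inequality would be obtained by testing the prox-defining objective of \eqref{eqn:ADMMsc_a} against the point $\vx^\star$ itself and letting $j\to\infty$, using $\vy^{k_j}-\vz^{k_j}/\rho\to\vx^\star$ to make the quadratic terms vanish in the limit. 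This upper/lower-semicontinuity bookkeeping around $\vzero$ is the only delicate step; the rest of the argument is the standard nonconvex-ADMM machinery.
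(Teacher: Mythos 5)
Your proposal follows essentially the same route as the paper's proof: Lemma~\ref{lem:ADMM_CS_conv} supplies sufficient decrease, a lower bound on $\mathcal{L}$ makes the telescoping argument work, $\vy^{k+1}-\vy^k\to\vzero$ propagates to $\vz^{k+1}-\vz^k\to\vzero$ and $\vx^{k+1}-\vy^{k+1}\to\vzero$, and boundedness plus the $\vx$-subproblem optimality condition give stationarity at any accumulation point. The differences are cosmetic but worth noting: your completion-of-squares identity for the lower bound is an exact version of the paper's descent-lemma estimate (exact here because $\Phi$ is quadratic); the paper gets boundedness of $\{\vy^k\}$ from the $\frac{\rho-L}{2}\|\vx^k-\vy^k\|_2^2$ term in its lower bound rather than from $\vx^k-\vy^k\to\vzero$; and the paper closes by exhibiting elements of $\partial\mathcal{L}(\vx^{k+1},\vy^{k+1},\vz^{k+1})$ of norm $O(\|\vy^{k+1}-\vy^k\|_2)$ instead of passing the prox optimality condition to the limit directly. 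Your explicit attention to the $f$-attentive convergence $h_p(\vx^{k_j+1})\to h_p(\vx^\star)$ needed for the closed-graph step is actually more careful than the paper, which glosses over it; however, your sketch of that step contains one misstatement: $\vy^{k_j}-\vz^{k_j}/\rho\to\vx^\star-\vz^\star/\rho$, not $\vx^\star$, since $\vz^\star=\mA^\top(\mA\vx^\star-\vb)$ need not vanish, so the quadratic terms do not disappear in the limit. The argument still closes: in the comparison inequality
\begin{equation*}
\lambda h_p(\vx^{k_j+1})+\tfrac{\rho}{2}\bigl\|\vx^{k_j+1}-(\vy^{k_j}-\vz^{k_j}/\rho)\bigr\|_2^2
\le
\lambda h_p(\vx^\star)+\tfrac{\rho}{2}\bigl\|\vx^\star-(\vy^{k_j}-\vz^{k_j}/\rho)\bigr\|_2^2,
\end{equation*}
both quadratic terms converge to the same value $\tfrac{1}{2\rho}\|\vz^\star\|_2^2$ (using $\vx^{k_j+1}\to\vx^\star$, which follows from $\vx^{k+1}-\vx^k\to\vzero$), so they cancel and yield $\limsup_j h_p(\vx^{k_j+1})\le h_p(\vx^\star)$; combined with lower semicontinuity this gives the required value convergence, and the rest of your argument goes through.
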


\begin{proof}
    Under the assumption $\rho> \sqrt{2}L$, from Lemma~\ref{lem:ADMM_CS_conv} we know 
    $$
    \mathcal{L}(\vx^{k}, \vy^{k}, \vz^{k})\le \mathcal{L}(\vx^{0}, \vy^{0}, \vz^{0}).
    $$
    Thus $\mathcal{L}(\vx^{k}, \vy^{k}, \vz^{k})$ is bounded. Also, we have
    \begin{equation*}
        \mathcal{L}(\vx^{k}, \vy^{k}, \vz^{k}) = \lambda h_p(\vx^k)+\Phi(\vy^k) + \frac{\rho}{2}\|\vx^k - \vy^k\|_2^2 + \la \nabla\Phi(\vy^k), \vx^k - \vy^k \ra,
    \end{equation*}
and utilizing convexity of $\Phi$ we get
$$
\Phi(\vx^k)\le \Phi(\vy^k)+ \la \nabla\Phi(\vy^k), \vx^k - \vy^k \ra + \frac{L}{2}\|\vx^k - \vy^k\|_2^2.
$$
Substituting back we have 

$$
         \mathcal{L}(\vx^{k}, \vy^{k}, \vz^{k}) \ge \lambda h_p(\vx^k) + \Phi(\vx^k)+ \frac{\rho-L}{2}\|\vx^k-\vy^k\|_2^2 \ge 0.
$$
    If $\{\vx^k\}$ is bounded and $\rho>\sqrt{2}L>L$, then $\|\vx^k - \vy^k\|$ is bounded, leading to the boundedness of $\{\vy^k\}$.
Also, since
\begin{equation*}
    \|\vz^k\|_2 = \|\nabla \Phi(\vy^k) - \nabla\Phi(\vzero) + \nabla \Phi(\vzero)\|_2 \le L \|\vy^k\|_2 + \|\nabla\Phi(\vzero)\|_2,
\end{equation*}
we have the sequence of $\{\vx^k, \vy^k, \vz^k\}$ is bounded, hence it has at least one accumulation point.

Now, by summing \eqref{eqn:ADMMscobj_decreasing} from $k=0$ to $\infty$ we can get
$$
\sum_{k=0}^{\infty} \left[ \mathcal{L}(\vx^{k}, \vy^{k}, \vz^{k}) - \mathcal{L}(\vx^{k+1}, \vy^{k+1}, \vz^{k+1}) \right] \ge \sum_{k=0}^{\infty} \left(\frac{\rho}{2} - \frac{L^2}{\rho}\right) \|\vy^k - \vy^{k+1}\|_2^2,
$$
and the telescoping sum on the left-hand side can be simplified to
$$
\mathcal{L}(\vx^{0}, \vy^{0}, \vz^{0}) - \lim_{k \to \infty} \mathcal{L}(\vx^{k}, \vy^{k}, \vz^{k}) \ge \left(\frac{\rho}{2} - \frac{L^2}{\rho}\right) \sum_{k=0}^{\infty} \|\vy^k - \vy^{k+1}\|_2^2.
$$
Then the sequence $\mathcal{L}(\vx^{k}, \vy^{k}, \vz^{k})$ is bounded below implies that
$\sum_{k=0}^\infty \|\vy^k - \vy^{k+1}\|_2^2 < \infty$ and $\|\vy^k - \vy^{k+1}\|_2^2\to 0$ as $k\to \infty$, i.e., $\vy^{k+1}-\vy^k \to 0$. In addition, we can prove $\lim_{k\to \infty}\|\vz^{k+1}-\vz^k\|_2=0$ from \eqref{eqn:ADMMsc_gradientLip} and $\lim_{k\to \infty}\|\vx^{k+1}-\vx^k\|_2=0$ from \eqref{eqn:ADMMsc_c}.

From the optimality condition of \eqref{eqn:ADMMsc_a}, we know there exists $\vv\in \partial \lambda h_p(\vx^{k+1})$ such that
$$
0=\vv+\rho(\vx^{k+1}-\vy^k + \frac{\vz^k}{\rho}).
$$
By letting $\vu = \vv + \rho(\vx^{k+1}-\vy^{k+1} + \frac{\vz^{k+1}}{\rho})$, we have $\vu\in \partial_{\vx}\mathcal{L}(\vx^{k+1}, \vy^{k+1}, \vz^{k+1})$ and
\begin{equation}
    \|\vu\|_2 = \|\vz^{k+1}-\vz^k +\rho( \vy^k - \vy^{k+1})\|_2 \le \|\vz^{k+1}- \vz^k\|_2 + \rho\|(\vy^{k+1}- \vy^k)\|_2\le (L+\rho)\|(\vy^{k+1} -\vy^k)\|_2.
\end{equation}
Also, from the optimality condition of \eqref{eqn:ADMMsc_b} and \eqref{eqn:ADMMsc_c}, we have
\begin{equation}
    \|\nabla_{\vy}\mathcal{L}(\vx^{k+1}, \vy^{k+1}, \vz^{k+1})\|=\|\nabla\Phi(\vy^{k+1})+\rho(\vy^{k+1}-\vx^{k+1})-\vz^{k+1}\|=\|\vz^{k+1}-\vz^k\|_2\le L\|\vy^{k+1}-\vy^k\|_2
\end{equation}
and
\begin{equation}
\rho\|\vx^{k+1}-\vy^{k+1}\|_2 = \|\vz^{k+1}-\vz^k\|_2 \le  L\|\vy^{k+1}-\vy^k\|_2.
\end{equation}
Since $\lim_{k\to \infty}\|\vy^{k+1}-\vy^k\|=0$, the accumulation point $(\vx^\star, \vy^\star, \vz^\star)$ is a stationary point of \eqref{eqn:Lagrangian_CS}. Now $\vx^\star=\vy^\star$ and $\vz^\star=\nabla\Phi(\vy^\star)$, then we have $0\in \partial\lambda h_p(\vx^\star)+\nabla\Phi(\vx^\star)$, which completes the proof.
\end{proof}

\begin{remark}
    Different from coercive functions like $\ell_p$ and $\ell_1 - \ell_2$, the function $h_p$ is not coercive. Therefore we have to assume the sequence is bounded to prove the convergence.
\end{remark}

\subsection{Numerical Experiments: Sparse Signal Recovery}
This section evaluates the performance of the proposed algorithms H2-FBS, H2-APG and H2-ADMM in sparse signal recovery. Numerical experiments are conducted using synthetic data to demonstrate the effectiveness and robustness of our methods in different scenarios. All experiments were implemented in MATLAB 9.8 (R2020a) on a desktop with an Intel i7-7700 CPU (4.20GHz). 

The experiments employ two types of sensing matrices:

\smallskip
\noindent \emph{Oversampled discrete cosine transform (DCT) matrix:}  The $m \times n$ sensing matrix $\mA$ is constructed such that its $j$th column is defined as:  
        \begin{equation*}
            \va_j := \frac{1}{\sqrt{m}}\cos\bigg(\frac{2\pi j \vw}{E}\bigg),
        \end{equation*}
where $\vw$ is a random vector uniformly distributed over $[0,1]^m$, and $E>0$ is a parameter that controls the coherence of the matrix. Larger values of $E$ result in greater coherence, making the recovery problem more challenging. The oversampled DCT matrix is widely used in various applications, particularly in scenarios where traditional $\ell_1$-minimization models face difficulties due to high coherence \cite{fannjiang2012coherence,Rahimi-Wang-Dong-Lou:SIAMSC:2019,Yin-Lou-He-Xin:SIAMSC:2015}. 

\smallskip
\noindent \emph{Gaussian matrix:} 
The sensing matrix $\mA$ is generated from a multivariate normal distribution $\mathcal{N}(\mathbf{0}, \Sigma)$. For a given correlation coefficient $r\in [0,1]$, the covariance matrix $\Sigma$ is defined as:
$$
\Sigma_{ij}=\left\{
  \begin{array}{ll}
    1, & \hbox{if $i=j$;} \\
    r, & \hbox{otherwise.}
  \end{array}
\right.
$$
Here, a larger $r$ value indicates a more challenging problem in sparse recovery \cite{zhang2018minimization,Rahimi-Wang-Dong-Lou:SIAMSC:2019}. This setup is commonly used to assess the performance of recovery algorithms under varying levels of correlation.

The ground truth $\vx\in \mathbb{R}^n$ is simulated as an $s$-sparse signal, where $s$ is the number of non-zero entries in $\vx$. We require the indexes of the non-zero entries to be separated at least $2E$. The values of non-zero entries are set differently according to the sensing matrix $\mA$. For $\mA$ being an oversampled DCT matrix,   the dynamic range of a signal $\vx$ is defined as 
$$
\Theta(\vx)=\frac{\max\{|x_i|: i \in \mathrm{supp} (\vx)\}}{\min\{|x_i|: i \in \mathrm{supp} (\vx)\}},
$$
which can be controlled by an exponential factor $D$. Particularly, a Matlab command for generating those non-zero entries is 
$$\texttt{xg=sign(randn(s,1)).*10.\^{} (D*rand(s,1))},
$$ 
as used in  \cite{Wang-Yan-Rahimi-Lou:IEEESP:2020}. For example, $D=1$ and $D=5$ correspond to $\Theta \approx 10^1$ and $\Theta \approx 10^5$, respectively. For $\mA$ being a Gaussian random matrix, all $s$ non-zero entries of the sparse signal follow the Gaussian distribution $\mathcal{N}(0,1)$. 

In our experiments, the size of sensing matrices $\mA^{m \times n}$ is consistently set to  $64 \times 1024$. Due to the nonconvex nature of the model, the initial guess $\vx^{(0)}$ for an algorithm significantly influences the final result.  We choose $\vx^{(0)}$ as the $\ell_1$ solution obtained by Gurubi for all testing algorithms. All algorithms terminate when the  relative error between $\vx^{k}$ and $\vx^{k-1}$ is smaller than $10^{-6}$ or iteration number greater than $5n$. 

\subsubsection{Algorithmic Comparison}\label{experiment:H2comparisons}
In this section, we evaluate our proposed algorithms H2-FBS, H2-APG, and H2-ADMM by addressing the noise-free sparse signal recovery problem using oversampled DCT matrices. We consider a sparsity level of  $s = 5$  and coherence levels  $E \in \{1, 10\}$. The dynamic range is set to  $D \in \{3, 5\}$, parameters under which our prior research \cite{jia2025sparse} has demonstrated that models like  $h_2$, which are scale and signed permutation invariant, exhibit superior performance. In this context, we set the regularization parameter  $\lambda = 0.0001$  for problem \eqref{model:uncons}. For H2-FBS and H2-APG, the step size  $\gamma$  is set to  $0.99 / \|\mA\|_2^2$; for H2-ADMM, the penalty parameter  $\rho$  is set to 1.

Figure \ref{fig:Objvalue-comp-DCT} presents the objective function values of problem \eqref{model:uncons} plotted against the iteration number $k\in \N$ for the algorithms H2-FBS, H2-APG, and H2-ADMM. All methods exhibit a decrease in objective value as $k$ increases. We noticed that the objective function values for H2-APG and H2-ADMM decreased faster than those for H2-FBS. 
%%%%%%%%%%%%%%%%%%%%%%%%%%%%%
\begin{figure}[htb]

\centering
\begin{tabular}{cc}
\includegraphics[width=2.2in]{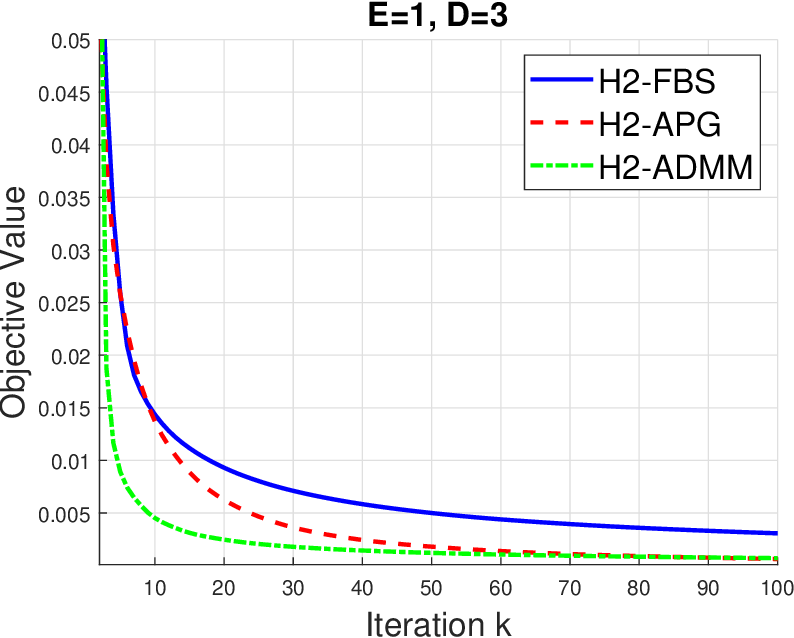}&
\includegraphics[width=2.2in]{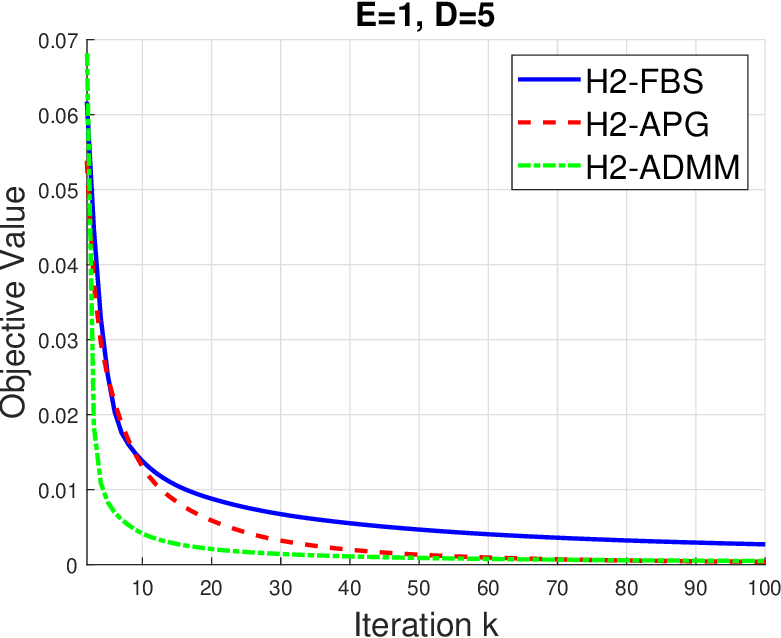}\\
\footnotesize{(a)}&\footnotesize{(b)}\\[0.1in]
\includegraphics[width=2.2in]{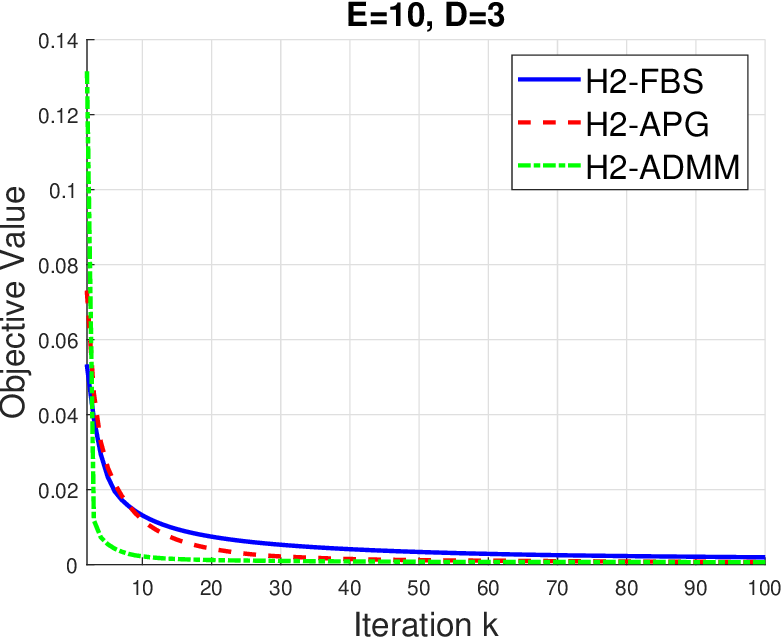}&
\includegraphics[width=2.2in]{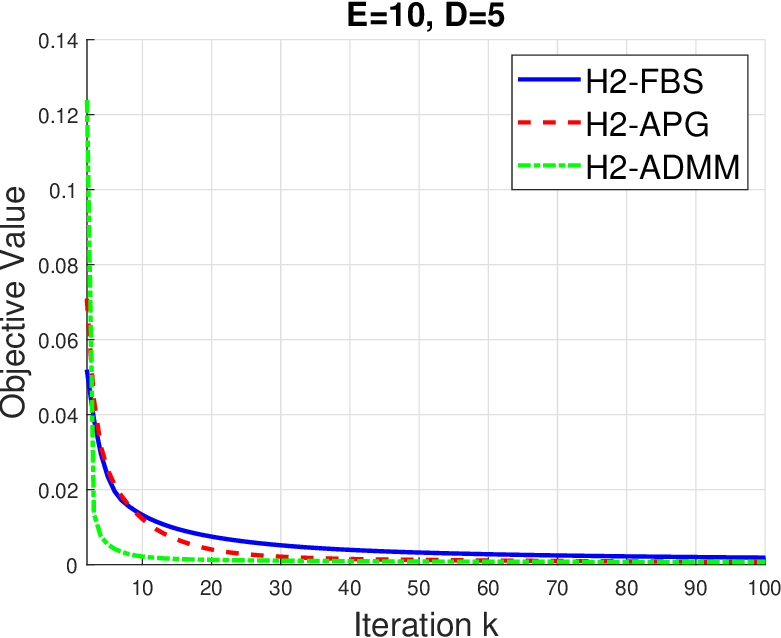}\\
\footnotesize{(c)}&\footnotesize{(d)}
\end{tabular}
\caption{Algorithmic comparison of objective values for problem \eqref{model:uncons} using DCT sensing matrices, presented in a \(2 \times 2\) grid format. Rows correspond to \(E=1,10\) from top to bottom, and columns correspond to \(D=3,5\) from left to right.}
\label{fig:Objvalue-comp-DCT}
\end{figure}

We further illustrate the discrepancy between the ground truth vector $\vx$ and the reconstructed solution $\vx^*$,  represented by the relative error $\|\vx^* - \vx\|_2/\|\vx\|_2$, plotted against the number of iterations in Figure~\ref{fig:errvalue-comp-DCT}. The results clearly indicate that the sequences generated by H2-APG and H2-ADMM converge significantly faster than those by H2-FBS. In scenarios involving low-coherence sensing matrices ($E=1$), H2-APG and H2-ADMM perform comparably. However, H2-APG demonstrates slightly superior computational efficiency in high-coherence scenarios ($E=10$). Additionally, as the complexity of the sparse signal recovery problem increases with higher $E$ and $D$ values, all compared algorithms require more iterations to converge.

Notably, H2-APG utilizes a single hyper-parameter $\gamma = \frac{0.99}{\|\mA\|_2^2}$, which proves to be robust and straightforward to implement, facilitating parameter tuning and adaptation to various scenarios. Overall, we rate H2-APG as the most effective algorithm for minimizing the model \eqref{model:uncons}, considering both recovery accuracy and computational efficiency.

\begin{figure}[htb]
\centering
\begin{tabular}{cc}
\includegraphics[width=2.2in]{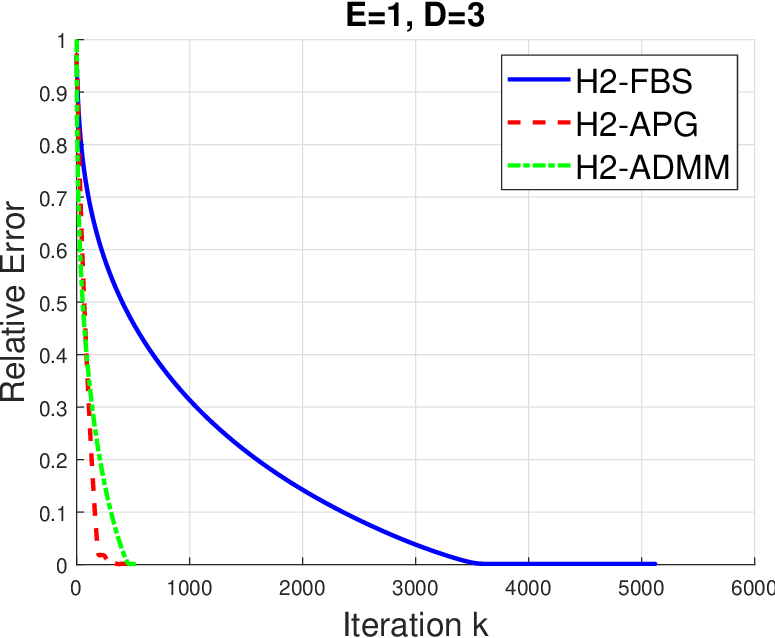}&
\includegraphics[width=2.2in]{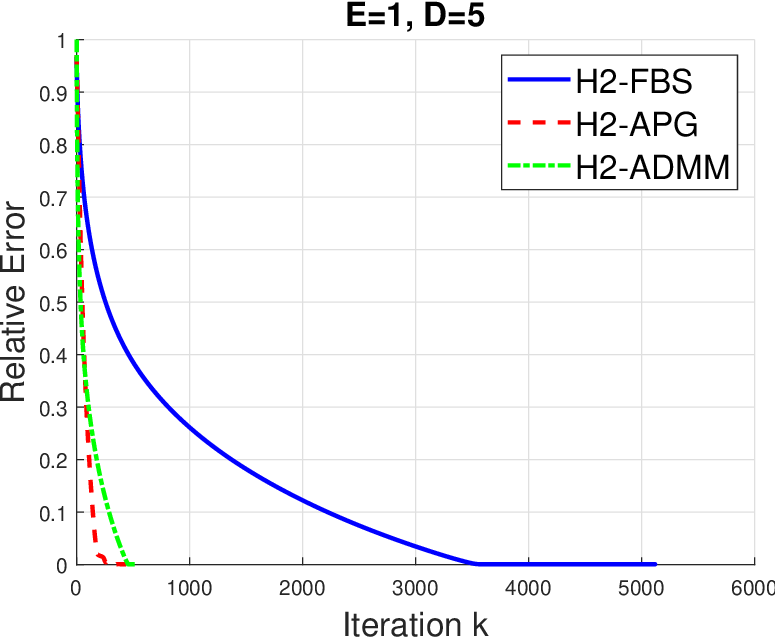}\\
\footnotesize{(a)}&\footnotesize{(b)}\\[0.1in]
\includegraphics[width=2.2in]{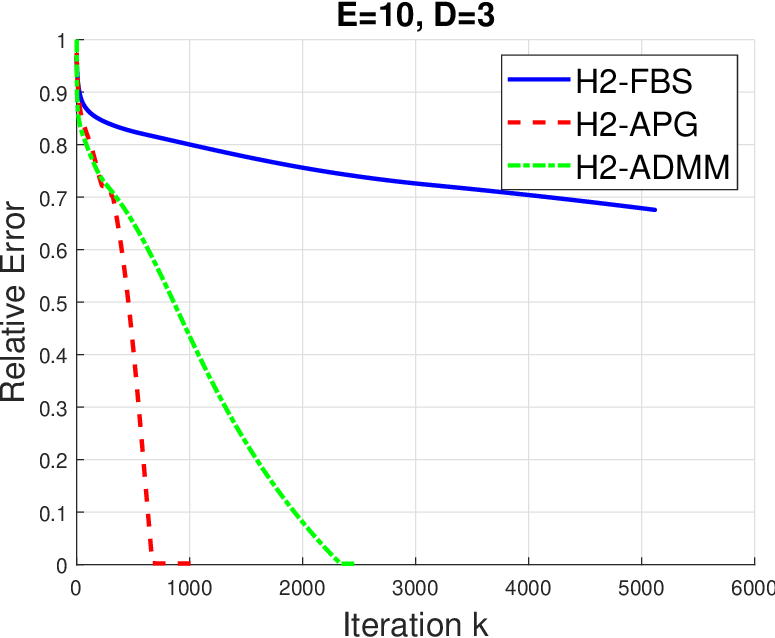}&
\includegraphics[width=2.2in]{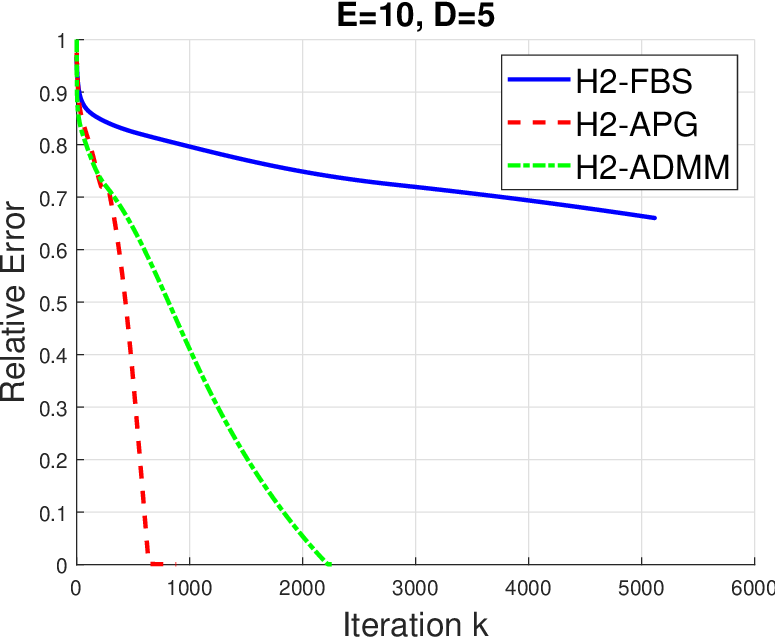}\\
\footnotesize{(c)}&\footnotesize{(d)}
\end{tabular}
\caption{Algorithmic comparison of relative error for problem \eqref{model:uncons} with DCT sensing matrices, presented in a \(2 \times 2\) grid format. Rows correspond to \(E=1, 10\) from top to bottom, and columns correspond to \(D=3, 5\) from left to right.}
 \label{fig:errvalue-comp-DCT}
\end{figure}

\subsubsection{Noise-free Measurements}

In this section, we focus on the performance of sparse signal recovery algorithms in the noise-free case for oversampled DCT matrices. Since the efficiency of the $\ell_1/\ell_2$ and $(\ell_1/\ell_2)^2$ based models compared with other sparse promoting models have been extensively studied in \cite{Wang-Yan-Rahimi-Lou:IEEESP:2020,jia2025sparse}, we only compare the signal recovery capacity of the proposed algorithms with the state-of-the-art algorithms for solving the $\ell_1/\ell_2$ and $(\ell_1/\ell_2)^2$ minimization problems,
\begin{equation}\label{model:constrained}
    \min\left\{h_p(\vx): \mA\vx= \vb, \vx\in \R^n\right\},
\end{equation}
where $p=1$ for $\ell_1/\ell_2$ model and $p=2$ for $(\ell_1/\ell_2)^2$ model. More detailed, we compare our algorithms H2-APG, H2-ADMM with the $\ell_1/\ell_2$-A1 algorithm in \cite{Wang-Yan-Rahimi-Lou:IEEESP:2020} and $(\ell_1/\ell_2)^2$ -LPMM, $(\ell_1/\ell_2)^2$ -QP algorithms in \cite{jia2025sparse}, as they are rated as the most efficient algorithms in those works. For simplicity, we denote the three algorithms $\ell_1/\ell_2$-A1, $(\ell_1/\ell_2)^2$-LPMM, $(\ell_1/\ell_2)^2$-QP as L-A1, D-LPMM, D-QP respectively.

We evaluate the performance of sparse signal recovery in terms of three metrics: success rate, the number of missing nonzero coefficients, and the number of misidentified nonzero coefficients. More precisely, the success rate is the number of successful trials over the total number of trials, and success is declared if the relative error of the reconstructed solution $\vx^*$ to the ground truth vector $\vx$ is no more than than $0.005$, i.e., $\|\vx^*-\vx\|_2/\|\vx\|_2\le 0.005$; the number of missing nonzero coefficients refers to the number of nonzero coefficients that an algorithm ``misses", i.e., determines to be zero; the number of misidentified nonzero coefficients refers to the number of nonzero coefficients that are ``misidentified", i.e., coefficients that are determined to be nonzero when they should be zero. The success rate is calculated over 50 trials, and the larger the success rate is, the better the reconstructed signal will be. The last two metrics measure how well each algorithm finds the signal support, meaning the location of the nonzero coefficients. These two metrics are calculated by the average of values over 50 trials. The smaller the values of these two metrics are, the better the reconstructed signals will be. 

In all settings of this subsection, we set $\lambda = 10^{-4}$ for the unconstrained model \eqref{model:uncons}. The other parameters of H2-APG and H2-ADMM are the same as those in subsection~\ref{experiment:H2comparisons}, and the parameter settings of L-A1 and D-LPMM are chosen as suggested in \cite{Wang-Yan-Rahimi-Lou:IEEESP:2020} and \cite{jia2025sparse}, where D-QP algorithm in \cite{jia2025sparse} have the advantage of parameter-free.

Figure~\ref{fig:success-comp-DCT} displays the success rate across 50 trials for various algorithms as the sparsity parameter $s$ ranges from $\{2, 6, 10, 14, 18, 22\}$. We consider different coherence matrices and dynamic ranges of signals, where $F\in\{1, 10, 20\}$ and $D\in\{0, 1, 5\}$. As shown in Figure~\ref{fig:success-comp-DCT}, our proposed algorithms perform comparably to L-A1, D-LPMM, and D-QP, and in some cases of lower sparsity, even outperform these benchmarks. For signals with high dynamic range and larger sparsity values, the performance of our proposed algorithms could be further enhanced. Table~\ref{tab:accuracy-comp-DCT} reports the accuracy of the tested algorithms, measured by the number of missing and misidentified nonzero coefficients. All algorithms exhibit similar performance on these metrics. Despite the suboptimal performance of our proposed algorithms in cases of high dynamic range and larger sparsity, their ability to accurately identify the location of nonzero entries suggests that a post-processing step, such as applying a least squares method, could effectively recover the signal.

% %%%%%%%%%%%%%%%%%%%%%%%%%%%%%%%%%%%%%%%%%%%%%%%%%%%%
\begin{figure}[htb]
    \centering
    \begin{subfigure}{0.3\textwidth}
        \includegraphics[width=\linewidth]{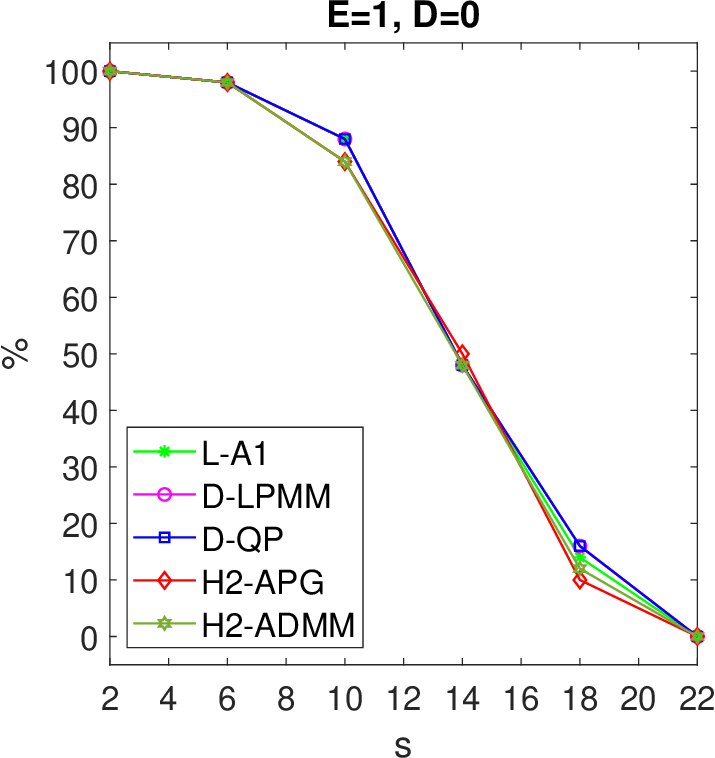}
    \end{subfigure}
    \hfill
    \begin{subfigure}{0.3\textwidth}
        \includegraphics[width=\linewidth]{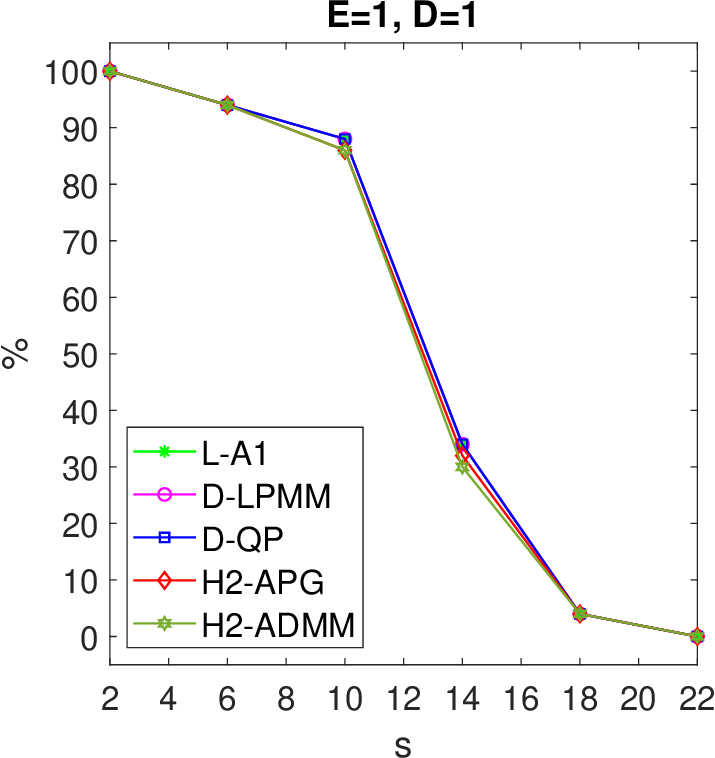}
    \end{subfigure}
    \hfill
    \begin{subfigure}{0.3\textwidth}
        \includegraphics[width=\linewidth]{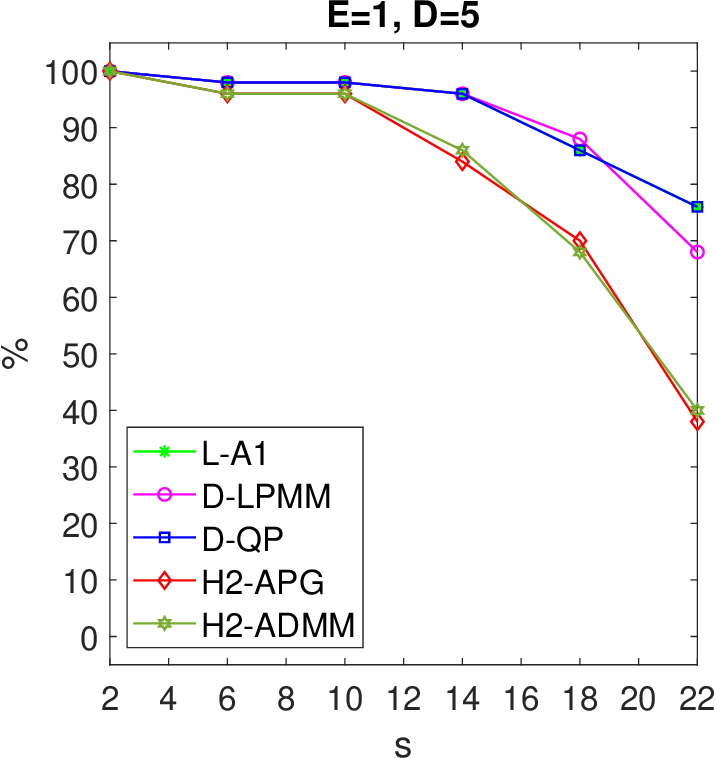}
    \end{subfigure}

    \medskip

    \begin{subfigure}{0.3\textwidth}
        \includegraphics[width=\linewidth]{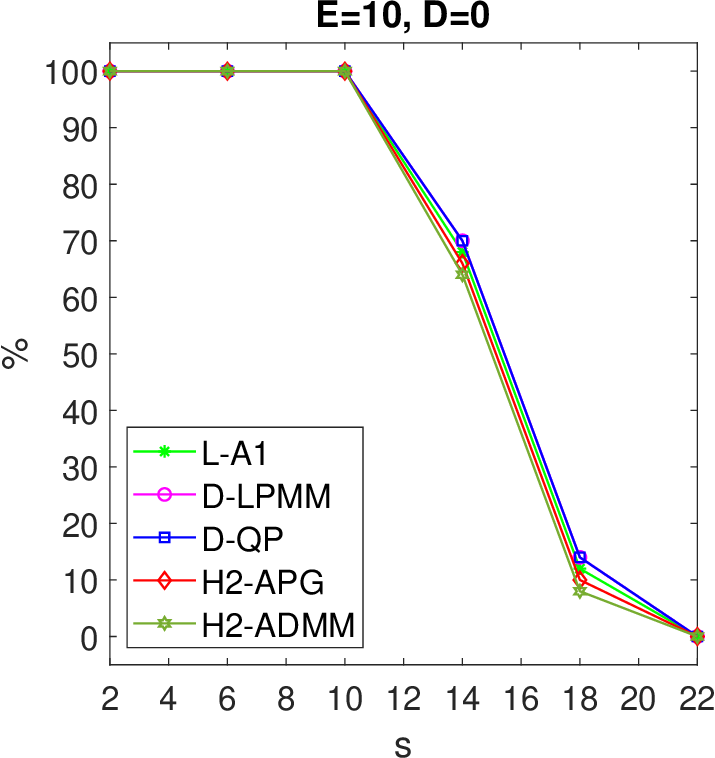}
    \end{subfigure}
    \hfill
    \begin{subfigure}{0.3\textwidth}
        \includegraphics[width=\linewidth]{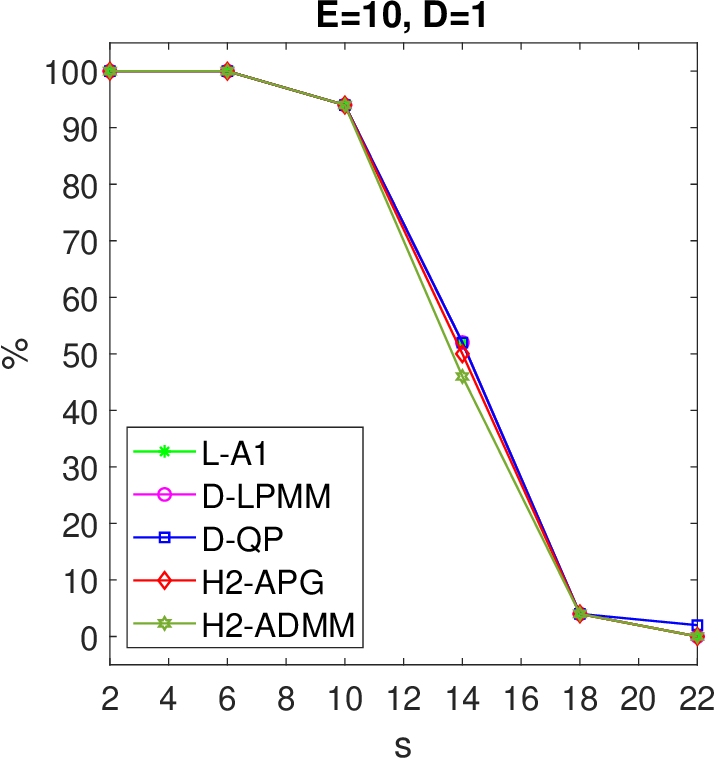}
    \end{subfigure}
    \hfill
    \begin{subfigure}{0.3\textwidth}
        \includegraphics[width=\linewidth]{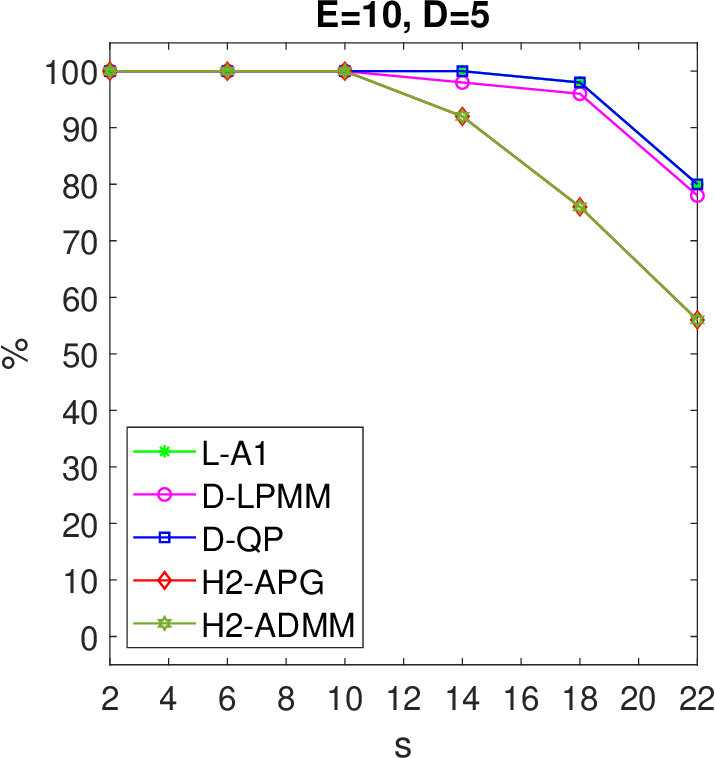}
    \end{subfigure}

    \medskip

    \begin{subfigure}{0.3\textwidth}
        \includegraphics[width=\linewidth]{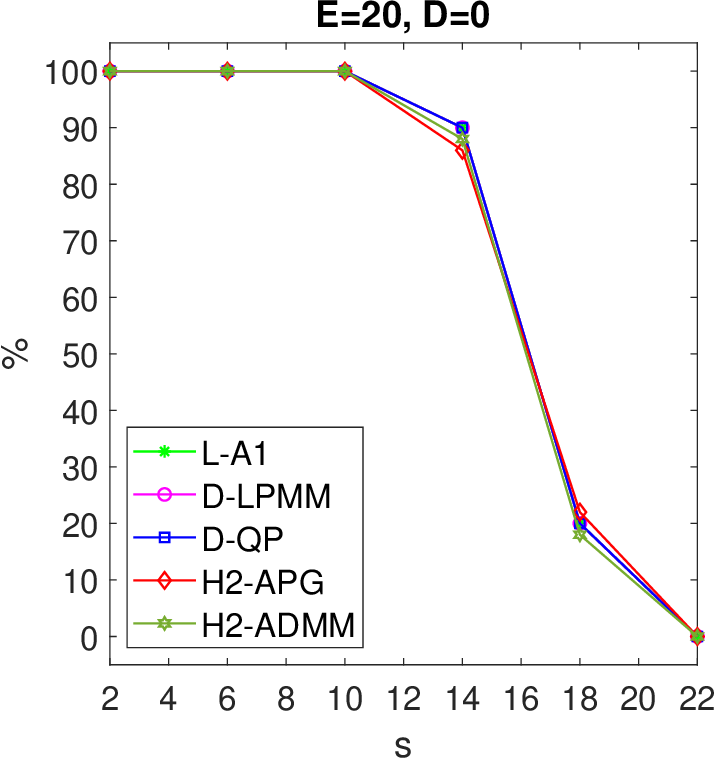}
    \end{subfigure}
    \hfill
    \begin{subfigure}{0.3\textwidth}
        \includegraphics[width=\linewidth]{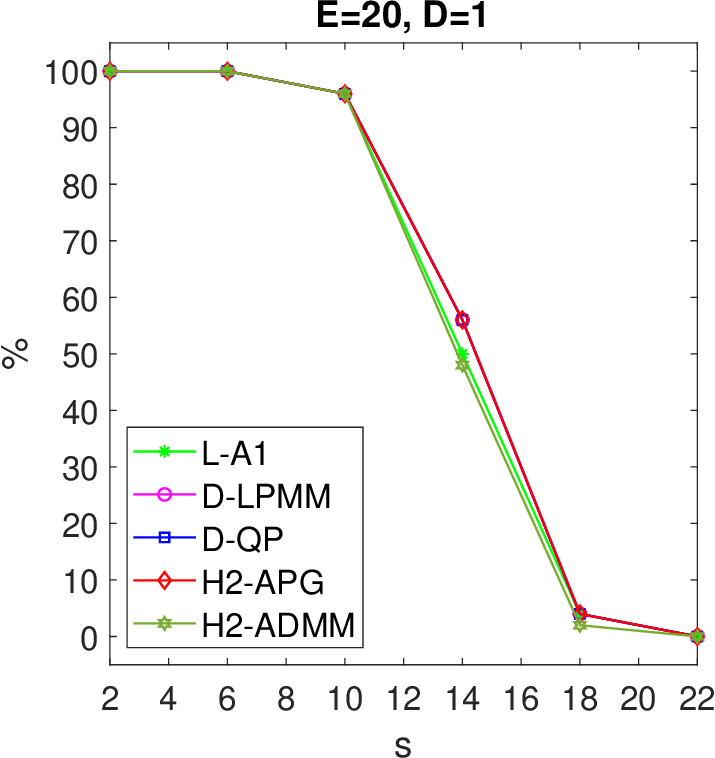}
    \end{subfigure}
    \hfill
    \begin{subfigure}{0.3\textwidth}
        \includegraphics[width=\linewidth]{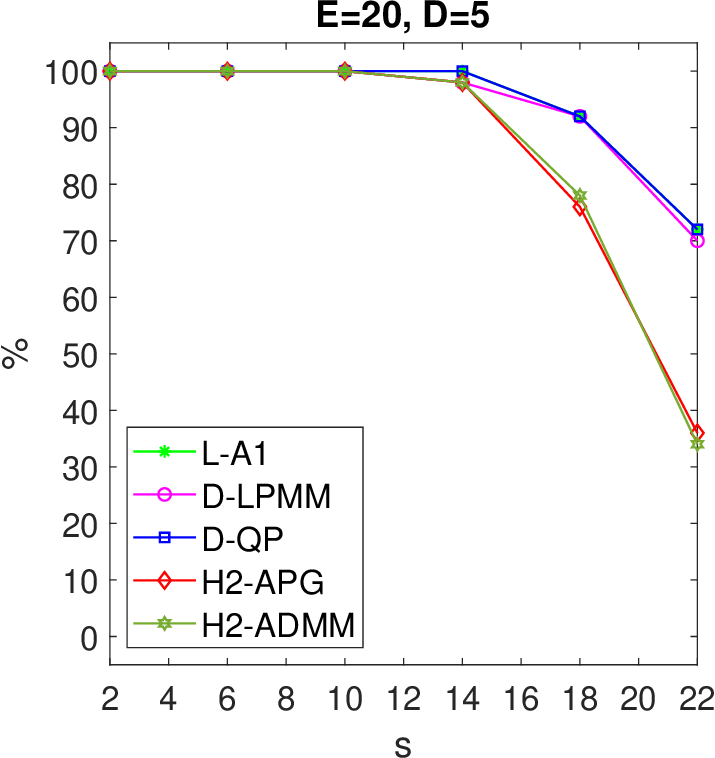}
    \end{subfigure}

    \caption{Algorithmic comparison of success rates is conducted for  DCT sensing matrices. The presentation of results is structured in a $3 \times 3$ grid format, where rows correspond to $E=1,10, 20$ from top to bottom, and columns correspond to $D=0,1,5$ from left to right.}

    \label{fig:success-comp-DCT}
\end{figure}

\begin{table}[htbp!] 
\caption{Accuracy of algorithms for solving problem \eqref{model:uncons} in noise-free setting }
\centering
\resizebox{\textwidth}{!}
{
\begin{tabular}{cccccc|ccccc}
\hline
\noalign{\hrule height 1pt}
&\multicolumn{5}{c|}{\# missing nonzero coeffs} &\multicolumn{5}{c}{\# misidentified nonzero coeffs}\\ 
\hline
s       & L-A1 & D-LPMM & D-QP & H2-APG & H2-ADMM & L-A1 & D-LPMM & D-QP & H2-APG & H2-ADMM\\ 
\noalign{\hrule height 1pt}
&\multicolumn{10}{c}{Oversampled DCT with $(E, D)= (10, 3)$}\\ 
\hline
2       & 0.00 & 0.00 & 0.00 & 0.00 & 0.00 & 0.00 & 0.00 & 0.00 & 0.00 & 0.00 \\
6       & 0.00 & 0.00 & 0.00 & 0.00 & 0.00 & 0.00 & 0.00 & 0.00 & 0.00 & 0.00 \\
10      & 0.00 & 0.00 & 0.00 & 0.00 & 0.00 & 0.00 & 0.00 & 0.00 & 0.00 & 0.00 \\
14      & 0.28 & 0.32 & 0.28 & 0.28 & 0.26 & 4.96 & 5.40 & 4.96 & 4.14 & 4.76 \\
18      & 1.36 & 1.84 & 1.36 & 1.10 & 1.04 & 17.54 & 22.04 & 17.60 & 13.72 & 15.24 \\
22      & 6.38 & 6.24 & 6.24 & 5.02 & 5.06 & 41.32 & 38.24 & 40.00 & 30.04 & 31.92 \\
\noalign{\hrule height 0.5pt}
&\multicolumn{10}{c}{Oversampled DCT with $(E, D)= (20, 3)$}\\ 
\hline
2       & 0.00 & 0.00 & 0.00 & 0.00 & 0.00 & 0.00 & 0.00 & 0.00 & 0.00 & 0.00 \\
6       & 0.00 & 0.00 & 0.00 & 0.00 & 0.00 & 0.00 & 0.00 & 0.00 & 0.00 & 0.00 \\
10      & 0.00 & 0.00 & 0.00 & 0.00 & 0.00 & 0.00 & 0.00 & 0.00 & 0.00 & 0.00 \\
14      & 0.28 & 0.50 & 0.28 & 0.18 & 0.18 & 4.90 & 6.96 & 4.90 & 3.78 & 3.36 \\
18      & 2.20 & 2.52 & 2.24 & 1.94 & 2.08 & 23.48 & 25.06 & 23.56 & 21.96 & 20.98 \\
22      & 7.84 & 8.20 & 8.02 & 4.38 & 4.28 & 45.64 & 43.38 & 45.84 & 40.38 & 39.26 \\
\noalign{\hrule height 1pt}
\end{tabular}
}
\label{tab:accuracy-comp-DCT}
\end{table}

\subsubsection{Noisy Data}

In this subsection, we consider the sparse signal recovery problem with white Gaussian noise. In order to test the performance of the proposed algorithms in solving $\ell_1/\ell_2$ and $(\ell_1/\ell_2)^2$ based sparse recovery problems, we first present various computational aspects of the proposed algorithms together with comparison to MBA in \cite{Zeng-Yi-Pong:SIAMOP:2021}, then we compare various sparse promoting models.

\begin{table}[hb!] 
\caption{Relative error for Gaussian noise signal recovery with oversampled DCT matrices}
\centering

\begin{tabular}{ccc|cccc}
\hline
\noalign{\hrule height 1pt}
\multicolumn{3}{c|}{settings} &\multicolumn{4}{c}{Relative error}\\ 
\hline
s       & E & D & spg$\ell_1$ & MBA & H2-APG & H2-ADMM \\ 
\noalign{\hrule height 1pt}
\hline
4 & 1 & 2 & 3.391e-03 & 2.053e-03 & 1.492e-03 & \textbf{1.491e-03} \\
4 & 1 & 3 & 3.227e-03 & 2.117e-03 & \textbf{1.458e-03} & 1.459e-03 \\
4 & 5 & 2 & 4.188e-03 & 2.907e-03 & \textbf{2.190e-03} & 2.192e-03 \\
4 & 5 & 3 & 1.081e-03 & 9.116e-04 & 6.607e-04 & \textbf{6.577e-04} \\
4 & 15 & 2 & 5.463e-01 & \textbf{2.134e-01} & 2.157e-01 & 2.867e-01 \\
4 & 15 & 3 & 2.490e-01 & \textbf{6.024e-02} & 7.340e-02 & 1.100e-01 \\
\noalign{\hrule height 0.5pt}
\hline
8 & 1 & 2 & 2.974e-03 & 2.227e-03 & \textbf{2.055e-03} & \textbf{2.055e-03} \\
8 & 1 & 3 & 5.583e-03 & 2.421e-04 & \textbf{2.211e-04} & 4.723e-03 \\
8 & 5 & 2 & 7.617e-03 & 3.633e-03 & \textbf{3.546e-03} & 3.547e-03 \\
8 & 5 & 3 & 1.563e-03 & 4.064e-04 & \textbf{3.864e-04} & 6.775e-04 \\
8 & 15 & 2 & 5.469e-01 & \textbf{1.034e-01} & 2.573e-01 & 1.984e-01 \\
8 & 15 & 3 & 6.491e-01 & 6.337e-01 & \textbf{5.300e-01} & 6.857e-01 \\
\noalign{\hrule height 0.5pt}
\hline
12 & 1 & 2 & 9.376e-02 & 1.686e-03 & \textbf{1.494e-03} & 4.714e-03 \\
12 & 1 & 3 & 2.027e-02 & 3.763e-04 & \textbf{3.644e-04} & 1.626e-02 \\
12 & 5 & 2 & 1.244e-01 & 7.536e-03 & \textbf{3.658e-03} & 1.124e-02 \\
12 & 5 & 3 & 3.572e-02 & \textbf{1.323e-02} & 1.407e-02 & 3.308e-02 \\
12 & 15 & 2 & 6.153e-01 & \textbf{3.506e-01} & 6.326e-01 & 3.785e-01 \\
12 & 15 & 3 & 5.750e-01 & 5.862e-01 & \textbf{3.775e-01} & 6.033e-01 \\
\noalign{\hrule height 1pt}
\end{tabular}
\label{tab:relerr-comp-noisy}
\end{table}

We compare our proposed algorithms with MBA in \cite{Zeng-Yi-Pong:SIAMOP:2021} for sparse signal recovery with white Gaussian noise. The test instances and experiment settings are similar to \cite{Zeng-Yi-Pong:SIAMOP:2021}. Specifically, the sensing matrix $\mA$ and ground truth signal $\vx$ are same as above, the observation $\vb$ is set to be $\vb=\mA\vx + 0.01 \xi$, where $\xi\in \R^m$ is a random vector with i.i.d. standard Gaussian entries, and we set $\sigma = 1.2 \|0.01\xi\|_2$.  The initial points of the compared algorithms in this subsection are chosen the same as in \cite{Zeng-Yi-Pong:SIAMOP:2021}, which are
\begin{eqnarray*}
    \vx^0 = \begin{cases}
        \mA^\dag\vb + \sigma \frac{\vx_{\ell_1}-\mA^\dag \vb}{\|\mA \vx_{\ell_1}-\vb\|_2}, \quad\quad & \text{if }\|\mA\vx_{\ell_1}-\vb\|_2 > \sigma \\
        \vx_{\ell_1} \quad & \text{otherwise,}
    \end{cases}
\end{eqnarray*}
where $\vx_{\ell_1}$ is an approximate solution solved via SPGL1 \cite{BergFriedlander:SIAMSC:2008} (version 2.1) using default settings, and $\mA^\dag$ is the pseudo-inverse of $\mA$. Note that such $\vx^0$ is in the constraint of problem \eqref{model:constrained}. In the experiment, $s\in \{4, 8, 12\}$, $F\in \{1,5, 15\}$ and $D\in \{2, 3\}$. The parameter for the MBA method is set as in \cite{Zeng-Yi-Pong:SIAMOP:2021}. For our proposed algorithms, $\lambda$ is set to be $0.1$ and $0.01$ for H2-APG and H2-ADMM respectively, and other parameters are the same as those in Section~\ref{experiment:H2comparisons}.

Table~\ref{tab:relerr-comp-noisy} present the relative error of the reconstructed solution $\vx^*$ to the ground truth vector $\vx$ averaged over 20 trials, i.e., $\|\vx^*-\vx\|_2/\|\vx\|_2$. It can be observed from Table~\ref{tab:relerr-comp-noisy} that compared with MBA, our proposed method H2-APG obtains recovered signals with smaller recovery error in most cases.

Now we compare different sparse promoting models, including $\ell_1$, $\ell_1 - \ell_2$ and $(\ell_1/\ell_2)^2$ for Gaussian noise sparse recovery with Gaussian matrices. Here, the $\ell_1$, $\ell_1-\ell_2$ models are the least squares problems with the corresponding sparse promoting regularizers.

The experiment setup follows \cite{Lou-Yan:JSC2018}. The standard deviation of the white Gaussian noise is set to be $\sigma=0.1$. Specifically, we consider a signal $\vx$ of length $n=512$ and $s=100$ nonzero elements. The number of measurements $m$ varies from $230$ to $300$.  The sensing matrix $\mA$ is generated by normalizing each column of a Gaussian matrix to be zero-mean and unit norm.  We use the mean-square-error (MSE) to quantify the recovery performance, i.e., $\text{MSE}= \|\vx^\star - \vx\|_2$. Note that with the known support of the ground truth signal, denoted as $I=\text{supp}(\vx)$, then one can compute $\sigma^2 \text{tr}(\mA_I^\top \mA_I)^{-1}$ as a benchmark, where $\mA_I$ denotes the columns of matrix $\mA$ restricted to $I$.

In the experiments, the parameter for $\ell_1-\ell_2$ and $\ell_1$ models are set the same as in \cite{Lou-Yan:JSC2018}. For our proposed algorithms, H2-APG and H2-ADMM, we set $\lambda$ to be $1e-7$ and $1e-5$ respectively. All other settings are the same as in Section~\ref{experiment:H2comparisons}.

Tabel~\ref{tab:MSE-comp-noisy} presents the average of MSE over 20 trials of the compared algorithms with Gaussian matrices for four specific values of $m$: 238, 250, 276, and 300. These values were considered in previous studies \cite{Lou-Yan:JSC2018,Xu-Chang-Xu-Zhang:IEEENNLS:12}. It is observed that the proposed H2-APG algorithm consistently achieves lower MSE than the other compared models in most cases, particularly when the sensing matrix exhibits higher coherence (i.e. when $m$ is smaller).

\begin{table}[h!] 
\caption{MSE for Gaussian noise signal recovery with Gaussian matrices}
\centering

\begin{tabular}{c|ccccc}
\hline
\noalign{\hrule height 1pt}
\multicolumn{1}{c|}{settings} &\multicolumn{5}{c}{MSE}\\ 
\hline
m & Oracle & $\ell_1$ & $\ell_1 - \ell_2$ & H2-APG & H2-ADMM \\ 
\noalign{\hrule height 1pt}
\hline
238 & 3.0116 & 4.856 & 4.078 & \textbf{4.0493} & 4.0624\\
250 & 2.7518 & 4.0355 & 3.8095 & \textbf{3.7615} & 3.7738 \\
276 & 2.3531 & 3.3316 & 3.1305 & \textbf{3.0969} & 3.1031 \\
300 & 2.0592 & 2.9662 & \textbf{2.8037} & 2.8512 & 2.8150 \\
\noalign{\hrule height 1pt}
\end{tabular}
\label{tab:MSE-comp-noisy}
\end{table}

\section{Application: Image Restoration}
Image restoration addresses the challenge of recovering an unknown image, denoted as $\vy$, from a degraded image $\vb$ described by the relationship:
\begin{equation} \label{model:restoration}
    \vb = \mA\vy + \text{noise}
\end{equation}
with $\mA$ being some bounded linear operator. This operator, along with the introduced noise, naturally emerges during the imaging, acquisition, and communication processes, resulting in the corrupted observation $\vb$. The primary objective of the image
restoration is to approximate the true image $\vy$ from the observed image $\vb$, allowing for the extraction of subtle yet crucial features and objects within $\vy$. Consequently, image restoration holds significant importance in the broader field of image processing and analysis.

In general, the operator $\mA$ in Equation \eqref{model:restoration} is a singular and spatially varying matrix, rendering the problem of finding the solution $\vy$ ill-posed. Due to this inherent
ill-posedness, a typical variational method is often formulated as:
\begin{equation}
    \inf_{\vy\in \mathbb{U}}\{\mathcal{F}(\vy, \vb) + \lambda \cdot \mathcal{R}(\vy)\},
\end{equation}
where $\mathcal{F}(\vy, \vb)$ represents the data-fidelity term, $\mathcal{R}(\vy)$ is the regularization term, $\lambda$ is a
regularization parameter that balances the importance between the data-fidelity term and the regularization term, and $\mathbb{U}$ is a finite-dimensional vector space containing the unknown solution. This variational approach serves as a robust framework for
addressing the challenges posed by the ill-posed nature of the model \eqref{model:restoration} in image
restoration.

One typical model is given by 
\begin{equation}\label{model:ROF}
    \argmin_{\vy} \left\{\frac{1}{2}\|\mA\vy - \vb\|_2^2+\lambda\|\vy\|_{\text{TV}}\right\},
\end{equation}
where $\vy$, $\vb$ are the original and observed noisy image respectively, and $\|\vy\|_{\text{TV}}$ is the total variation of $\vy$.

Suppose a two-dimensional (2D) image is defined on an $m \times n$ Cartesian grid. By using a standard linear index, we can represent a 2D image as a vector, i.e. the $((i-1)\cdot m + j)$-th component denotes the intensity value at pixel $(i,j)$. Without loss of generality, we assume $n=m$ that the image is square.

Let $\mD$ denote the $n \times n$ first-order difference matrix defined by
\begin{equation*}
    \mD := \begin{bmatrix}
        0&&&\\
        -1&1&&\\
        &\ddots&\ddots&\\
        &&-1&1
    \end{bmatrix}
\end{equation*}
and choose the $\mB$ as a $2n^2 \times n^2$ matrix
\begin{equation}
    \mB := \begin{bmatrix}
        \mI_n \otimes \mD\\
        \mD \otimes \mI_n
    \end{bmatrix}=\begin{bmatrix}
        \mB_x\\\mB_y
    \end{bmatrix}, 
\end{equation}
where $\mI_n$ is the $n\times n$ identity matrix and the notation $\mP \otimes \mQ$ denotes the Kronecker product of matrices $\mP$ and $\mQ$. Here we note $\mB_x$ and $\mB_y$ are the forward difference matrix with Neumann Boundary (Type II) condition in the horizontal and vertical directions, respectively.

For an image $\vy\in \R^{n^2}$, the total variation of $\vy$ can be defined as 
\begin{equation}
    \|\vy\|_{\text{ATV}}:= \|\mB\vy\|_1=\sum_{i=1}^{n^2}\left|(\mB\vy)_i\right|+|(\mB\vy)_{n^2 + i}|=\sum_{i=1}^{n^2}\left\|\begin{bmatrix}
        (\mB\vy)_i\\(\mB\vy)_{n^2+i}
    \end{bmatrix}\right\|_1
\end{equation}
and
\begin{equation}\label{def:ITV}
    \|\vy\|_{\text{ITV}}:=\sum_{i=1}^{n^2}\left\|\begin{bmatrix}
        (\mB\vy)_i\\(\mB\vy)_{n^2+i}
    \end{bmatrix}\right\|_2= \sum_{i=1}^{n^2}\sqrt{(\mB\vy)_i^2+(\mB\vy)_{n^2 + i}^2}
\end{equation}
for anisotropic and isotropic cases, respectively.

Let $\mB_i$ be a two-row matrix formed by stacking the $i$-th row of $\mB_x$ and $\mB_y$. Then the anisotropic and isotropic TV restoration models can be written as
\begin{equation}
    \argmin_{\vy} \left\{\frac{1}{2}\|\mA\vy - \vb\|_2^2+\lambda\sum_{i=1}^{n^2}\|\mB_i\vy\|_1\right\}
\end{equation}
and 
\begin{equation}\label{model:ITV}
    \argmin_{\vy} \left\{\frac{1}{2}\|\mA\vy - \vb\|_2^2+\lambda\sum_{i=1}^{n^2}\|\mB_i\vy\|_2\right\}.
\end{equation}

\subsection{Algorithm: Image Restoration}
In this section, we examine the model
\begin{equation}\label{model:TVhp}
    \argmin_{\vy} \left\{\frac{1}{2}\|\mA\vy - \vb\|_2^2+\lambda\sum_{i=1}^{n^2}h_p\left(\mB_i\vy\right)\right\}\quad \text{for}\;\; p=1,2
\end{equation}
By defining $\varphi:\R^{2n^2}\to \R$ as
\begin{equation}\label{def:phi}
    \varphi(\vu):=\sum_{i=1}^{n^2} h_p\left(\begin{bmatrix}
        \vu_i \\ \vu_{n^2+i}
    \end{bmatrix}\right)
\end{equation}
then the model \eqref{model:TVhp} is equivalent to 
\begin{equation}\label{model:TVhp_rewrite}
        \argmin_{\vy} \left\{\frac{1}{2\lambda}\|\mA\vy - \vb\|_2^2+\varphi(\mB\vy)\right\}
\end{equation}
Note that follows from definition in \eqref{def:prox}, we have \eqref{model:TVhp_rewrite} as evaluating proximity operator $\prox_{\varphi\circ \mB}$ at $\vb$.

The ADMM is the most favorable method for solving optimization problems. To this end, we rewrite problem \eqref{model:TVhp_rewrite} as
\begin{align} \label{model:TVhp_ADMM}
    \min_{\vx\in \R^{2n^2}, \vy \in \R^{n^2}} &\quad \lambda \varphi(\vx) + \frac{1}{2}\|\mA\vy - \vb\|_2^2\\
    s.t. & \quad \vx = \mB\vy \nonumber
\end{align}
Then the corresponding augmented Lagrangian to \eqref{model:TVhp_ADMM} is
\begin{equation} \label{eqn:TVhpADMMLfun}
    \mathcal{L}_{\rho}(\vx, \vy, \vz) = \lambda \varphi(\vx) + \frac{1}{2}\|\mA\vy - \vb\|_2^2 + \la \vz, \vx - \mB\vy \ra + \frac{\rho}{2}\|\vx - \mB\vy\|_2^2,
\end{equation}
where $\vz$ is the Lagrangian multiplier (dual variable) and $\rho>0$ is a parameter. The ADMM scheme iterates as follows:
\begin{subequations} \label{eqn:ADMMsc_IR}
\begin{align}
        \vx^{k+1} &\in \argmin_{\vx\in \R^{2n^2}} \left(\lambda \varphi(\vx)+ \frac{\rho}{2} \left\|\vx - \mB\vy^k + \frac{\vz^k}{\rho}\right\|\right)\label{eqn:ADMMsc_x},\\
        \vy^{k+1} &= \argmin_{\vy\in \R^{n^2}} \left( \frac{1}{2}\|\mA\vy - \vb\|_2^2+ \frac{\rho}{2}\left\| \mB\vy - \vx^{k+1} - \frac{\vz^k}{\rho}\right\|_2^2 \right)\label{eqn:ADMMsc_y},\\ 
        \vz^{k+1} &= \vz^k + \rho (\vx^{k+1} - \mB\vy^{k+1}).   \label{eqn:ADMMsc_z} 
\end{align}
\end{subequations}

The $\vx$-subproblem is 
\begin{equation}\label{eqn:v-update}
    \vx^{k+1}\in \text{prox}_{\frac{\lambda}{\rho}\varphi}\left(\mB\vy^{k}-\frac{\vz^k}{\rho}\right).
\end{equation}
Since $\varphi$ defined in \eqref{def:phi} is separable, then $\prox_{\frac{\lambda}{\rho}\varphi}$ can be computed using \cite[Algorithm 1]{jia2024computing} and \cite[Algorithm 2]{jia2024computing}.

The $\vy$-subproblem has a closed-form solution given by
\begin{equation}\label{eqn:u-update}
   \vy^{k+1} = \left(\mA^\top \mA +\rho\mB^{\top}\mB \right)^{-1} \left( \mA^\top \vb + \mB^\top(\rho \vx^{k+1}+\vz^k) \right). 
\end{equation}
Under the condition of $\text{ker}(\mA)\cap\text{ker}(\mB)=\{\vzero\}$, the solution is unique.

\begin{algorithm} 
\caption{The ADMM algorithm for problem \eqref{model:TVhp_rewrite}} \label{alg:ADMM_IR}
\begin{algorithmic}[1] 
\State \textbf{Input:} Initialize $\vx^0 \in \R^{2n^2}, \vy^0 \in \mathbb{R}^{n^2}$
    \For{$k=1,2,\cdots$}
        \State \begin{eqnarray}
        \vx^{k+1}&\in& \text{prox}_{\frac{\lambda}{\rho_k}\varphi}\left(\mB\vy^{k}-\frac{\vz^k}{\rho_k}\right)\notag\\
    \vy^{k+1} &=& \left(\mA^\top \mA +\rho_k\mB^{\top}\mB \right)^{-1} \left( \mA^\top \vb + \mB^\top(\rho_k \vx^{k+1}+\vz^k) \right) \notag\\
    \vz^{k+1} &=& \vz^k + \rho_k (\vx^{k+1} - \mB\vy^{k+1})\quad \text{and}\quad \rho_{k+1}=\sigma\rho_k \;\;\text{with}\;\;\sigma>1\notag
\end{eqnarray}
    \EndFor
\State \textbf{Output:} $\vy^\infty$
\end{algorithmic}
\end{algorithm}

\subsection{Numerical Experiments: Image Restoration}
In this section, we present numerical experiments to demonstrate the performance of Algorithm~\ref{alg:ADMM_IR} for problem~\eqref{model:TVhp_rewrite}, denoted as $\textbf{h2-ADMM}$, and verify the effectiveness of the proposed method. We evaluate the proposed method on widely-used datasets by comparing it with Total-Variation baselines. In the following, we will first introduce the experimental settings, then we will show the qualitative and quantitative comparison results.

\subsubsection{Experimental Settings}

In this subsection, we introduce the details of the used datasets, baseline models, evaluation metrics, and implementation specifics for our experiments.

We assess the performance of our restoration algorithm using two metrics: the Peak-Signal-to-Noise Ratio (PSNR) and the Structural SIMilarity (SSIM) index. For a restored image, say $\tilde{\vy}$, its PSNR value is defined as
$$
\text{PSNR}:= 10 \log_{10}  \frac{255^2 n}{\|\tilde{\vy}-\vy\|^2},
$$
where $n$ is the number of pixels in $\vy$. A higher PSNR value suggests better image quality, indicating reduced reconstruction error. The SSIM index, evaluating the visual impact of luminance, contrast, and structure of an image, is given by:
$$
\text{SSIM} := \frac{(2 \mu_{\mathbf{u}} \mu_{\tilde{\mathbf{u}}} + c_1)(2 \sigma_{\mathbf{u} \tilde{\mathbf{u}}} + c_2)}{(\mu_{\mathbf{u}}^2 + \mu_{\tilde{\mathbf{u}}}^2 + c_1)(\sigma_{\mathbf{u}}^2 + \sigma_{\tilde{\mathbf{u}}}^2 + c_2)},
$$
where \( \mu_{\mathbf{u}} \) and \( \mu_{\tilde{\mathbf{u}}} \) are the average pixel values of \( \mathbf{u} \) and \( \tilde{\mathbf{u}} \), respectively, \( \sigma_{\mathbf{u}} \) and \( \sigma_{\tilde{\mathbf{u}}} \) are the variances of \( \mathbf{u} \) and \( \tilde{\mathbf{u}} \), and \( \sigma_{\mathbf{u} \tilde{\mathbf{u}}} \) is the covariance of \( \mathbf{u} \) and \( \tilde{\mathbf{u}} \). The constants \( c_1 \) and \( c_2 \) are included to maintain stability when denominators are close to zero. The SSIM index ranges from $-1$ to $1$, with higher values indicating higher similarity and thus better quality.

For our numerical experiments, we assume $\|\mA\|=1$ for the blurring kernel $\mA$ in the degraded model \eqref{model:restoration}. The test images are sourced from the USC-SIPI database (\url{http://sipi.usc.edu/database/}). The blurred and noisy observation $\vb$ is obtained by the image using two methods. The first method involves {pure blurring} the image without the addition of noise. In the second method {blurring with Gaussian noise}, blurring is applied using the Matlab function $\texttt{fspecial('average', hsize)}$, where $\texttt{hsize}$ is the size of the filter, followed by adding Gaussian noise with standard derivation $\sigma$, for example, $\frac{20}{51}$. Alternative blurring techniques may also be employed using different configurations of $\texttt{fspecial}$, such as $\texttt{fspecial('gaussian', hsize, sigma)}$. 

We approximate the original image by solving the optimization model \eqref{model:TVhp_rewrite} using Algorithm~\ref{alg:ADMM_IR}. To evaluate the effectiveness of our method, we compare our method with the Total-Variation image restoration model, specifically the Proximal Alternating Predictor-Corrector (PAPC) algorithm in \cite{Chen-Huang-Zhang:IP:2013,Li-Shen-Xu-Zhang:AiCM:15,Micchelli-Shen-Xu:IP-11}. The PAPC algorithm is designed to minimize the sum of two proper lower-semi-continuous convex functions, formulated as follows:
\begin{equation}\label{eqn:PAPC}
    \vy^\star = \argmin_{\vy\in \R^{n^2}} \;\left(f_1 \circ \mB\right)(\vy)+f_2(\vy)
\end{equation}
where $f_2$ is differentiable on $\R^{n^2}$ with a $1/\beta$-Lipschitz continuous gradient for some $\beta\in (0, +\infty)$. Let $\lambda_{max}(\mB\mB^\top)$ denote the largest eigenvalue of $\mB\mB^\top$. Then the details of the PAPC algorithm are described in Algorithm~\ref{alg:PAPC}.

\begin{algorithm} 
\caption{PAPC algorithm for problem \eqref{eqn:PAPC} (\cite[Algorithm 4]{Chen-Huang-Zhang:IP:2013})} \label{alg:PAPC}
\begin{algorithmic}[1] 
\State \textbf{Input:} Initialize $\vy^0 \in \mathbb{R}^{n^2}$, $\vx^0\in \R^{2n^2}$, $0<\lambda\le 1/\lambda_{max}(\mB\mB^\top)$, $0<\gamma<2\beta$.
    \For{$k=1,2,\cdots$}
        \State \begin{eqnarray}
    \vy^{k+\frac{1}{2}} &=& \vy^k - \gamma \nabla f_2(\vy^k),\notag\\
    \vx^{k+1} &=& \left(I-\text{prox}_{\frac{\gamma}{\lambda}f_1}\right)\left(\mB\vy^{k+\frac{1}{2}}+(I-\lambda \mB\mB^\top)\vx^k\right),\notag\\
    \vy^{k+1} &=& \vy^{k+\frac{1}{2}} - \lambda \mB^\top \vx^{k+1}\notag
\end{eqnarray}
    \EndFor
\State \textbf{Output:} $\vy^\infty$
\end{algorithmic}
\end{algorithm}

By incorporating the PAPC algorithm (Algorithm~\ref{alg:PAPC}) into the isotropic TV restoration model \eqref{model:ITV} (denoted as \textbf{TV-PAPC}), we specify $f_1$ as the isotropic total variation norm, $\|\cdot\|_{\text{ITV}}$, defined in Equation \eqref{def:ITV}. Furthermore, we define $f_2(\vy)$ as $\frac{1}{2\lambda}\|\mA\vy - \vb\|_2^2$, capturing the data fidelity term.

All numerical experiments were performed on a desktop equipped with an Intel i7-7700 CPU (4.20GHz) running MATLAB 9.8 (R2020a). We evaluated the performance of the algorithm across various model parameters $\lambda$, in terms of PSNR, SSIM, and CPU time. The algorithm's iteration terminates if the number of iterations reaches a preset maximum, the PSNR drops significantly, or the relative change in successive updates $\|\vy^{k+1}-\vy^k\|/\|\vy^k\|\le 5 \times 10^{-7}$, where $\vy^{k+1}$ and $\vy^k$ are two successive updates of the algorithm.

\subsubsection{Qualitative and Quantitative Comparisons}
The results presented in this subsection provide a comprehensive comparison between the h2-ADMM and TV-PAPC methods across various settings. Each method's performance is evaluated based on the PSNR and SSIM indices, which quantify the restoration quality in terms of error reduction and structural similarity, respectively. The execution time is also considered to assess the efficiency of each algorithm.

Image deblurring experiments are performed on two natural images Barbara and Finger print to verify the effectiveness of the proposed method. The test images are shown in Figure~\ref{fig:orig} For each image, two groups of experiments are carried out, including deblurring and simultaneous deblurring and denoising. 

\begin{figure}[htbp!]
\centering
% \resizebox{\textwidth}{!}
% {
\begin{tabular}{cccc}
\includegraphics[width=0.33\textwidth]{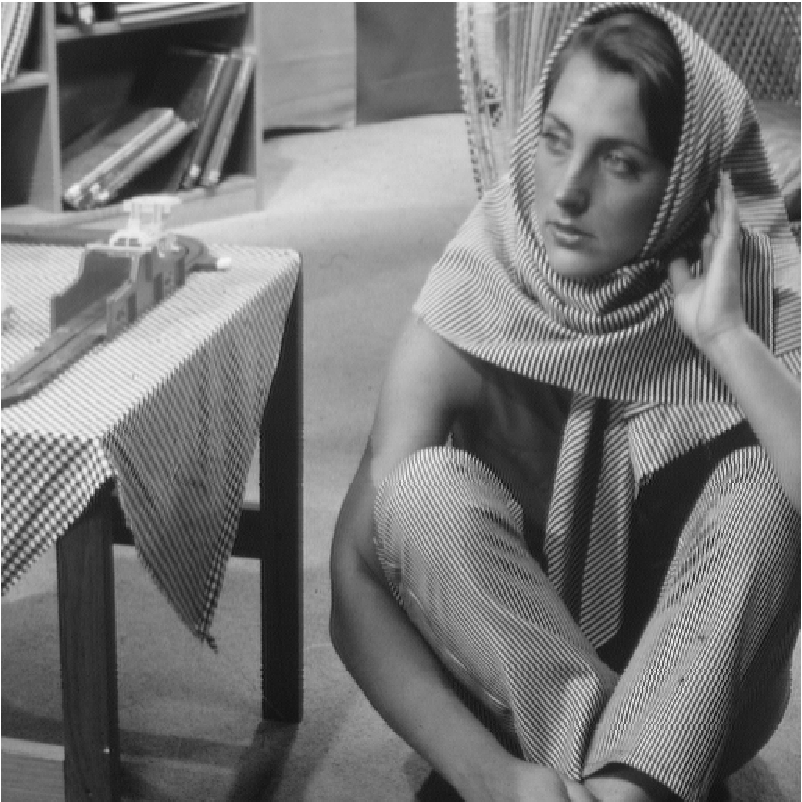}&
\includegraphics[width=0.33\textwidth]{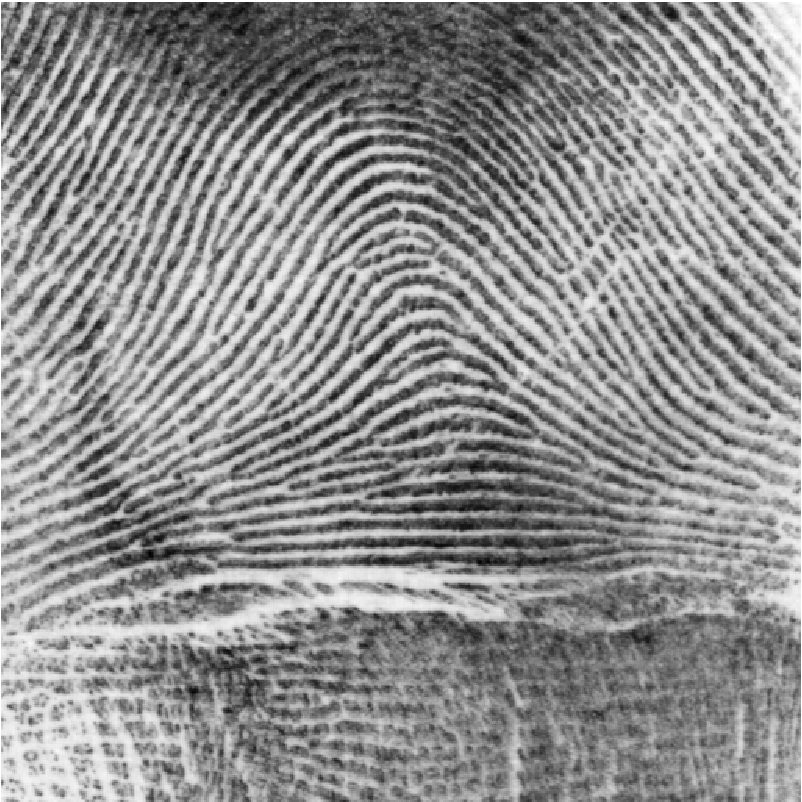}\\
(a)&(b)
\end{tabular}
% }
\caption{Original images: (a) Barbara, (b) Finger Print.}
\label{fig:orig}
\end{figure}

The deblurring experiment results, as shown in Table~\ref{table:pureblurring} and Figure~\ref{fig:deblur_nonoise}, remarkably highlight the superior performance of the h2-ADMM method in terms of both image quality and computational efficiency. This methodological distinction becomes evident through the significantly higher PSNR and SSIM values achieved by h2-ADMM across various image types and blurring filters.

For example, when comparing the results from the ``Barbara" and ``F.print" images under different filter settings, h2-ADMM consistently achieves near-perfect SSIM values (approaching 1.000) and extraordinarily high PSNR values (exceeding 85 in cases using Gaussian blurring), suggesting an almost flawless restoration in visual terms. The execution time for h2-ADMM also significantly outperforms TV-PAPC. We observed that the algorithm stops after only a few iterations with h2-ADMM, a crucial factor in practical applications where processing speed is as vital as output quality.

We note that Total Variation methods in image restoration primarily aim to minimize the sum of the differences in pixel values along the horizontal and vertical directions. One potential drawback is that they can sometimes overly smooth the image, leading to the loss of important details and textures. This effect is particularly noticeable in regions where fine patterns or subtle textures are present. In contrast, our proposed model \eqref{model:TVhp_rewrite} addresses this by normalizing the 2-dimensional vector of each pixel's horizontal and vertical differences. This normalization process primarily focuses on the relative ratio of the horizontal to vertical differences, rather than their absolute values. By minimizing the $\ell_1$ norm of the normalized vector, our model promotes the preservation of more details, effectively maintaining the integrity of subtle textures and fine patterns in the image.
\begin{figure}[htbp!]
\centering
% \resizebox{\textwidth}{!}
% {
\begin{tabular}{cccc}
\includegraphics[width=0.3\textwidth]{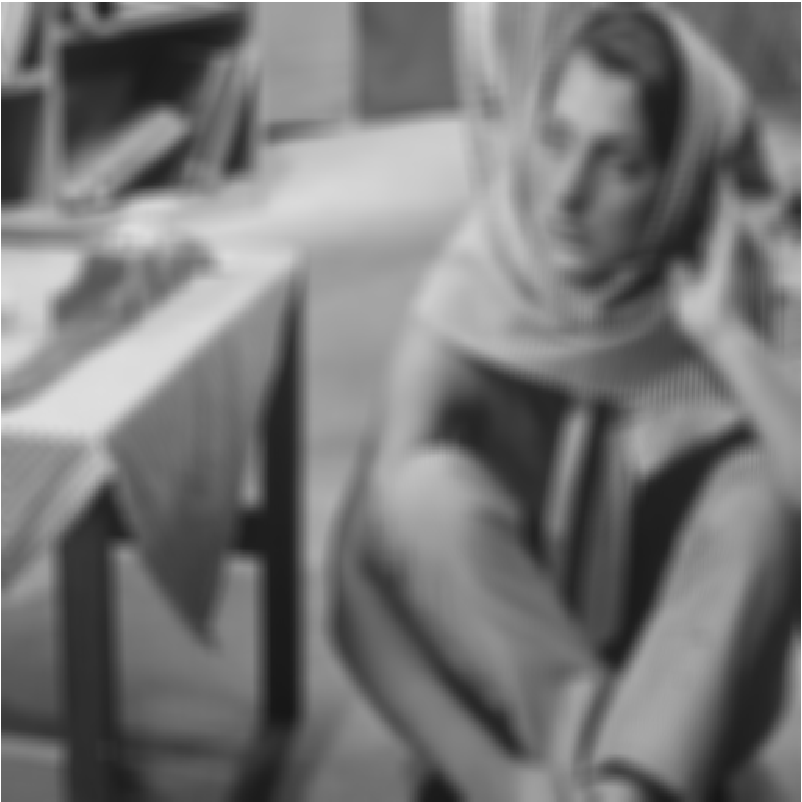}&
\includegraphics[width=0.3\textwidth]{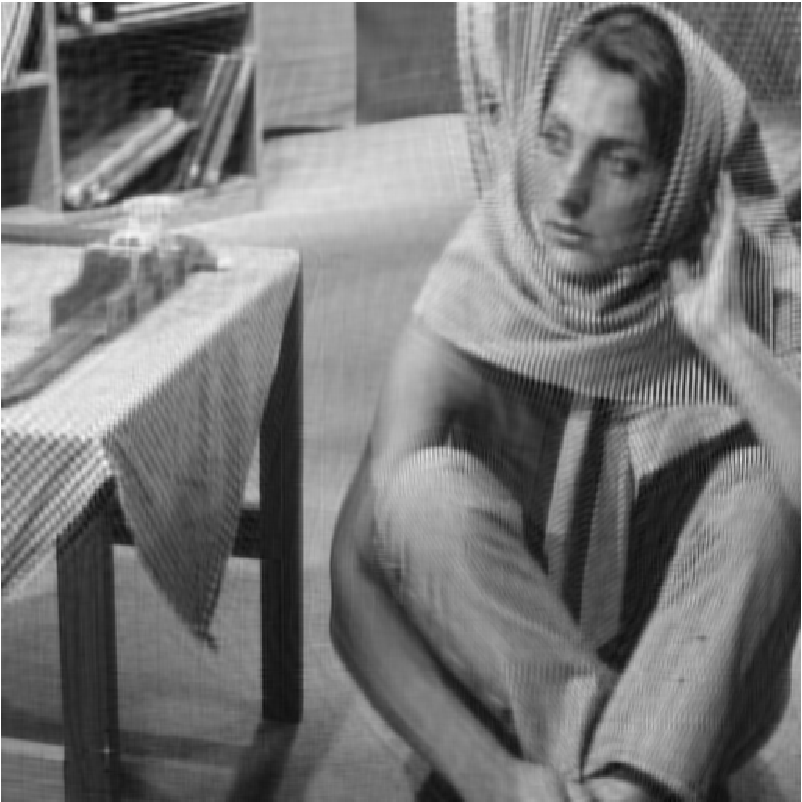}&
\includegraphics[width=0.3\textwidth]{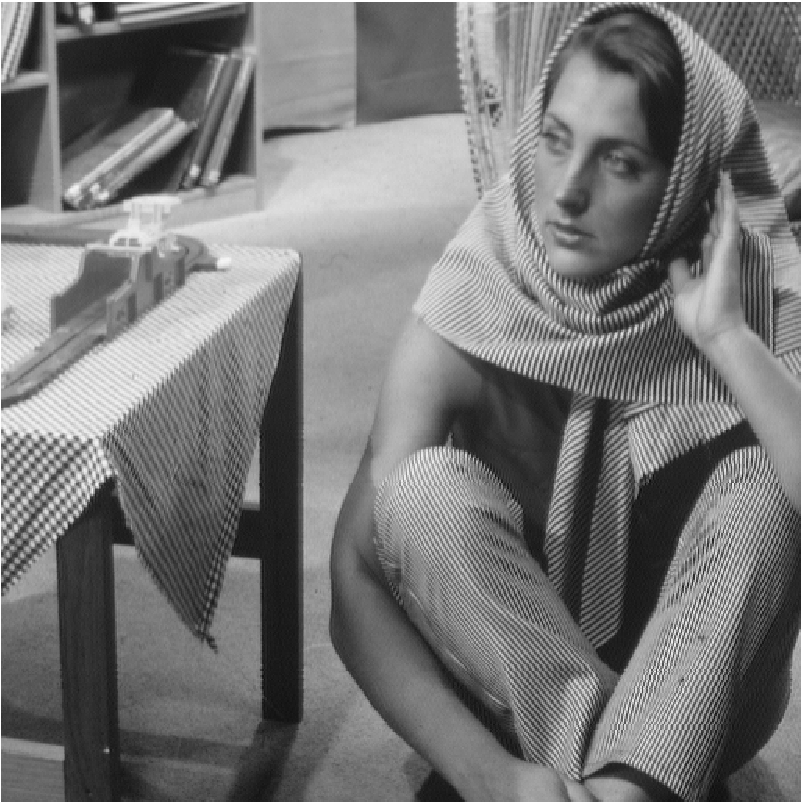}\\
\includegraphics[width=0.3\textwidth]{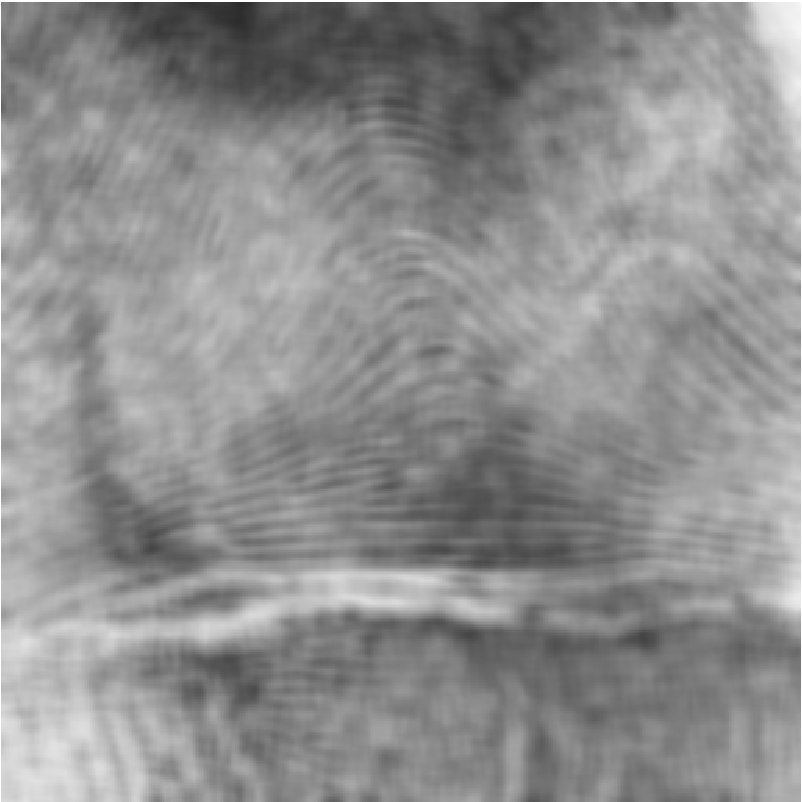}&
\includegraphics[width=0.3\textwidth]{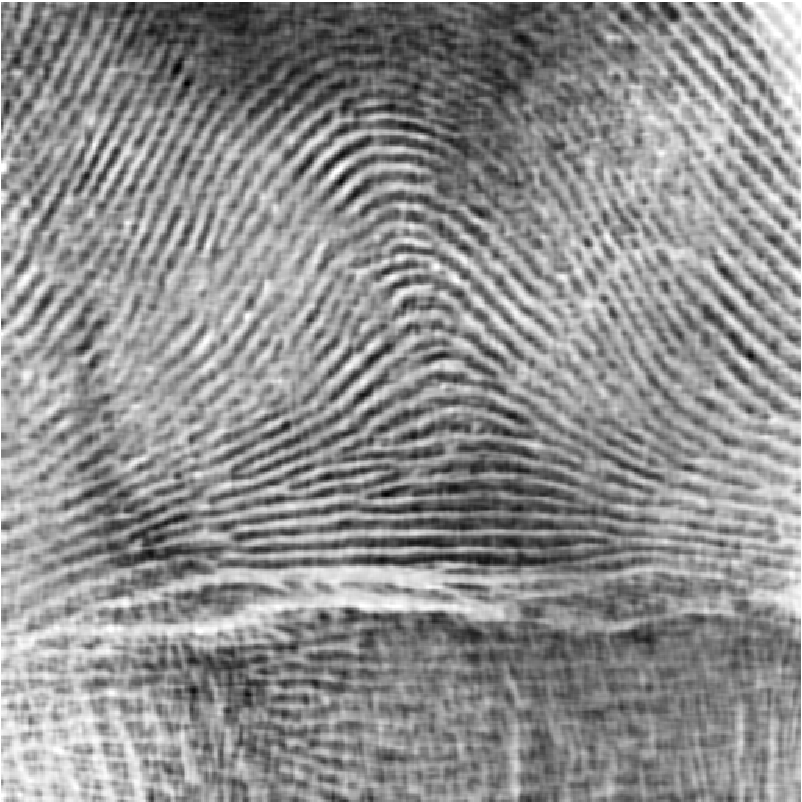}&
\includegraphics[width=0.3\textwidth]{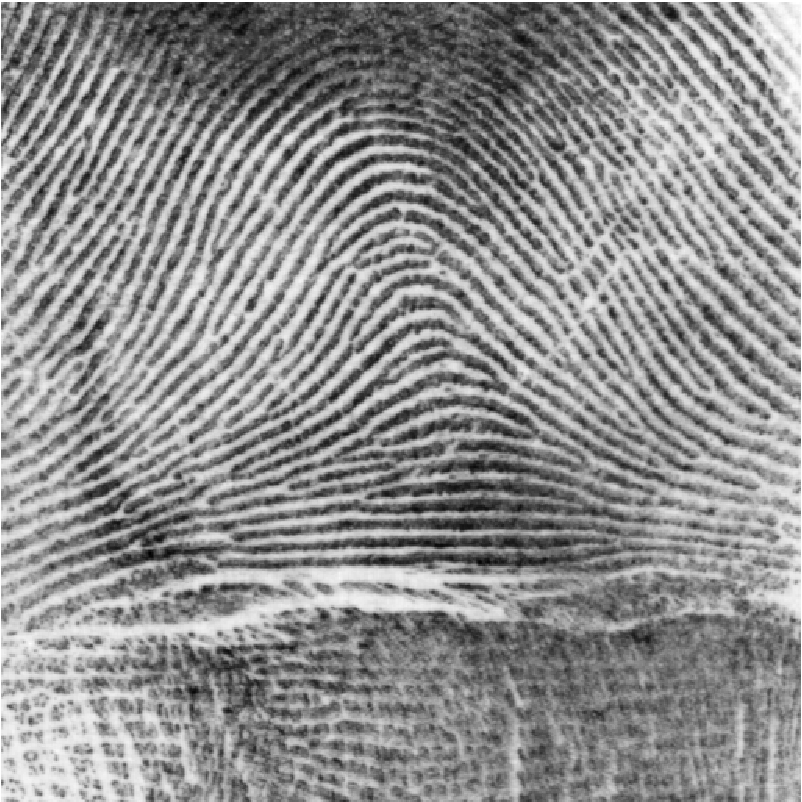}\\
(a)&(b)&(c)
\end{tabular}
% }
\caption{Quantitative comparisons of deblurring results: (a) degradation without noise, (b) restoration using the TV-PAPC method, and (c) restoration using the h2-ADMM method.}
\label{fig:deblur_nonoise}
\end{figure}

\begin{table}[ht]
\caption{Restoration results for blurred observations}
\centering
\resizebox{\textwidth}{!}
{
\begin{tabular}{cc|ccccc|cccc}
\noalign{\hrule height 1pt}
&  &  \multicolumn{5}{c|}{h2-ADMM} & \multicolumn{4}{c}{TV-PAPC} \\ 
\hline
Image &  blurring filter & $\lambda_{\text{h2-ADMM}}$ & $\rho_{\text{h2-ADMM}}$     & PSNR     & SSIM  & Time  &  $\lambda_{\text{TV-PAPC}}$     & PSNR    & SSIM  & Time  \\ 
\noalign{\hrule height 1pt}
\multirow{5}{*}{Barbara} & $\texttt{(`average', 5)}$ & 2e-02 & 1e-08 & 58.627 & 0.998 &   2.8125 & 1e-02 & 29.412 & 0.793 &  24.0156  \\
&$\texttt{(`average', 9)}$& 2e-02 & 1e-08 & 55.867 & 0.997 &   2.8125 & 1e-02 & 25.730 & 0.620 &  25.6250 \\
&$\texttt{(`average', 13)}$& 2e-02 & 1e-08 & 48.255 & 0.986 &   2.6406 & 1e-02 & 24.705 & 0.532 &  26.0469 \\
&$\texttt{(`gaussian', 13, 1)}$& 1e-01 & 1e-08 & 85.439 & 1.000 &   2.7656 & 1e-02 & 33.950 & 0.888 &  25.5625 \\
&$\texttt{(`gaussian', 25, 1)}$& 1e-01 & 1e-08 & 85.439 & 1.000 &   2.9063 & 1e-02 & 33.950 & 0.888 &  26.9219 \\
\hline
\multirow{5}{*}{F.print} & $\texttt{(`average', 5)}$ & 5e-02 & 1e-08 & 64.613 & 1.000 &   2.9844 & 1e-03 & 35.869 & 0.984 &  23.0625 \\
&$\texttt{(`average', 9)}$& 2e-02 & 1e-08 & 61.154 & 1.000 &   2.8125 & 1e-03 & 29.126 & 0.931 &  24.9844  \\
&$\texttt{(`average', 13)}$& 1e-02 & 1e-08 & 59.533 & 1.000 &   2.6094 & 1e-03 & 22.176 & 0.772 &  25.0156 \\
&$\texttt{(`gaussian', 13, 1)}$& 1e-01 & 1e-08 & 84.686 & 1.000 &   2.7656 & 1e-03 & 43.778 & 0.997 &  25.0938 \\
&$\texttt{(`gaussian', 25, 1)}$& 1e-01 & 1e-08 & 84.686 & 1.000 &   2.7188 & 1e-03 & 43.778 & 0.997 &  24.4844 \\
\noalign{\hrule height 1pt}
\end{tabular}
}
\label{table:pureblurring}
\end{table}

\begin{table}[ht]
\caption{Restoration results for blurred observations with noise}
\centering
\resizebox{\textwidth}{!}
{
\begin{tabular}{cc|ccccc|cccc}
\noalign{\hrule height 1pt}
&  &  \multicolumn{5}{c|}{h2-ADMM} & \multicolumn{4}{c}{TV-PAPC} \\ 
\hline
Image &  blurring filter and noise & $\lambda_{\text{h2-ADMM}}$ & $\rho_{\text{h2-ADMM}}$     & PSNR     & SSIM  & Time  &  $\lambda_{\text{TV-PAPC}}$     & PSNR    & SSIM  & Time  \\ 
\noalign{\hrule height 1pt}
\multirow{10}{*}{Barbara} & $\texttt{(`average', 5)}$, $\sigma=\frac{21}{50}$ & 0.020 & 0.010 & 29.660 & 0.744 &  4.8438 & 0.010 & 28.636 & 0.749 &  8.3906 \\
&$\texttt{(`average', 9)}$, $\sigma=\frac{21}{50}$& 0.020  & 0.010 & 26.760 & 0.607 &  6.3281& 0.010 & 25.277 & 0.581 & 10.5156\\
&$\texttt{(`average', 13)}$, $\sigma=\frac{21}{50}$& 0.020  & 0.010 & 25.300 & 0.546 &  6.0781
& 0.010 & 24.310 & 0.497 & 10.0000\\
&$\texttt{(`average', 5)}$, $\sigma=1$& 0.100  & 0.010 & 27.571 & 0.627 &  4.8750& 0.010 & 27.414 & 0.604 &  9.1875\\
&$\texttt{(`average', 9)}$, $\sigma=1$& 0.100   & 0.010 &25.042 & 0.522 &  5.2813&0.010 & 24.906 & 0.504 & 10.1875\\
&$\texttt{(`average', 13)}$, $\sigma=1$& 0.100   & 0.010 &24.333 & 0.468 &  5.8594&0.010 & 24.101 & 0.454 &  9.8125\\
&$\texttt{(`gaussian', 13, 1)}$, $\sigma=\frac{21}{50}$& 0.100 &   0.005 & 33.308 & 0.860 &  6.1875& 0.010 & 32.580 & 0.857 &  9.9219\\
&$\texttt{(`gaussian', 25, 1)}$, $\sigma=\frac{21}{50}$& 0.100 &   0.005 & 33.238 & 0.860 &  5.8438& 0.010 & 32.580 & 0.857 & 10.8750\\
&$\texttt{(`gaussian', 13, 1)}$, $\sigma=1$& 0.100 &  0.010 & 30.752 & 0.758 &  4.8125& 0.010 & 30.519 & 0.720 & 10.0469\\
&$\texttt{(`gaussian', 25, 1)}$, $\sigma=1$& 0.100 & 0.010 & 30.752 & 0.758 &   5.1719 & 0.010 & 30.519 & 0.720 &  11.2500 \\
\hline
\multirow{10}{*}{F.print} & $\texttt{(`average', 5)}$, $\sigma=\frac{21}{50}$ & 0.050 & 0.010 & 33.500 & 0.971 & 3.9844 & 0.010 & 33.116 & 0.968 & 9.8594\\
&$\texttt{(`average', 9)}$, $\sigma=\frac{21}{50}$& 0.020 & 0.010 & 29.349 & 0.930 &   3.9844 & 0.001 & 27.341 & 0.897 &  10.5156 \\
&$\texttt{(`average', 13)}$, $\sigma=\frac{21}{50}$& 0.010 & 0.010 & 25.171 & 0.859 &   3.5469 & 0.001 & 22.045 & 0.761 &  26.5938 \\
&$\texttt{(`average', 5)}$, $\sigma=1$& 0.200 & 0.010 & 31.197 & 0.951 &   3.4375 & 0.010 & 30.755 & 0.946 &   2.0469 \\
&$\texttt{(`average', 9)}$, $\sigma=1$& 0.100 & 0.010 & 27.240 & 0.889 &   3.6719 & 0.005 & 26.332 & 0.869 &  11.0000 \\
&$\texttt{(`average', 13)}$, $\sigma=1$& 0.030 & 0.010 & 23.003 & 0.777 &   3.6094 & 0.005 & 21.488 & 0.719 &  26.1719 \\
&$\texttt{(`gaussian', 13, 1)}$, $\sigma=\frac{21}{50}$& 0.100 & 0.010 & 38.557 & 0.990 &   3.4688 & 0.001 & 38.511 & 0.990 &   2.6094 \\
&$\texttt{(`gaussian', 25, 1)}$, $\sigma=\frac{21}{50}$& 0.100 & 0.010 & 38.557 & 0.990 &   3.4375 & 0.001 & 38.511 & 0.990 &   2.3906\\
&$\texttt{(`gaussian', 13, 1)}$, $\sigma=1$& 0.500 & 0.010 & 35.946 & 0.983 &   3.3125 & 0.050 & 35.431 & 0.981 &   2.2031 \\
&$\texttt{(`gaussian', 25, 1)}$, $\sigma=1$& 0.500 & 0.010 & 35.946 & 0.983 &   3.5469 & 0.050 & 35.431 & 0.981 &   2.3906 \\
\noalign{\hrule height 1pt}
\end{tabular}
}
\label{table:PSNR}
\end{table}

For simultaneous deblurring and denoising, in examining the performance across different filters and noise levels, it is evident that both the H2-ADMM and TV-PAPC methods generally produce comparable results in terms of PSNR and SSIM values, as shown in Tabel~\ref{table:PSNR} and Figure~\ref{fig:deblur_noise}.

When considering the efficiency of the algorithms, similar observations were made in cases of pure blurring. The h2-ADMM algorithm reaches a satisfying result after only a few iterations, hence consistently outperforming TV-PAPC in terms of processing time across almost all test cases.

In terms of restoration quality, h2-ADMM achieves slightly higher PSNR values and maintains comparable or superior SSIM values relative to TV-PAPC. This indicates the effectiveness of the proposed model \eqref{model:TVhp_rewrite} and the aligned h2-ADMM algorithm.

We note that compared to the pure deblurring case shown above, the superiority of the h2-ADMM is not particularly evident in cases involving simultaneous deblurring and denoising. As previously mentioned, the proposed model tends to preserve more detail by promoting the normalized 2-dimensional difference vector for pixels with a large relative ratio between its two entries. This approach may inadvertently preserve more details along with some heavy noise. This is likely the reason our method does not exhibit strong superiority in scenarios involving deblurring with noise. Therefore, we rate our method as being well-suited for deblurring with no noise or only slight noise. Overall, our comparative analysis underscores the efficacy of the h2-ADMM algorithm in delivering high-quality image restoration with greater efficiency.

\begin{figure}[htbp!]
\centering
% \resizebox{\textwidth}{!}
% {
\begin{tabular}{cccc}
\includegraphics[width=0.3\textwidth]{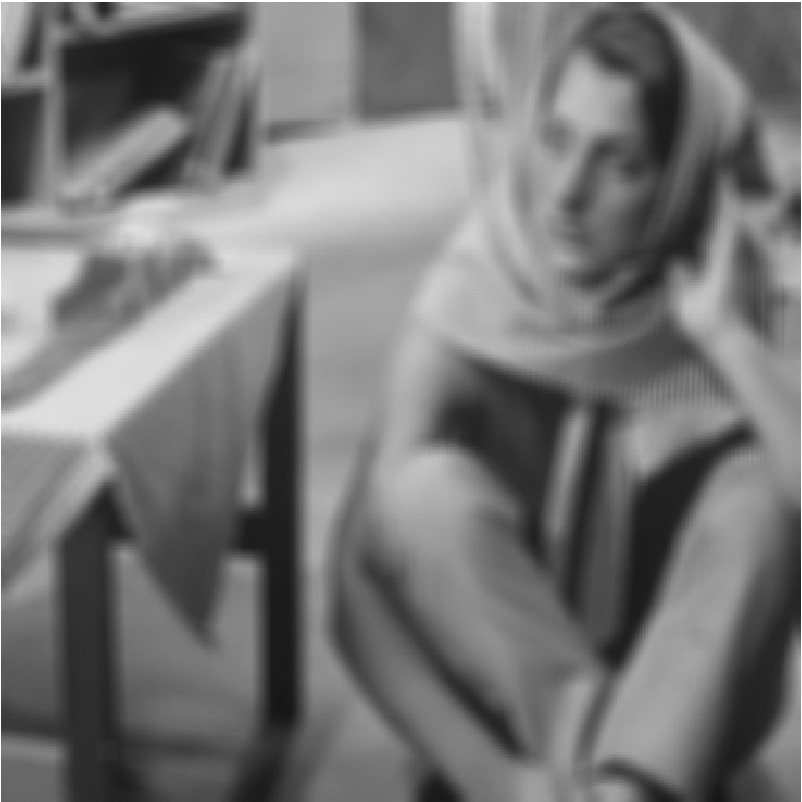}&
\includegraphics[width=0.3\textwidth]{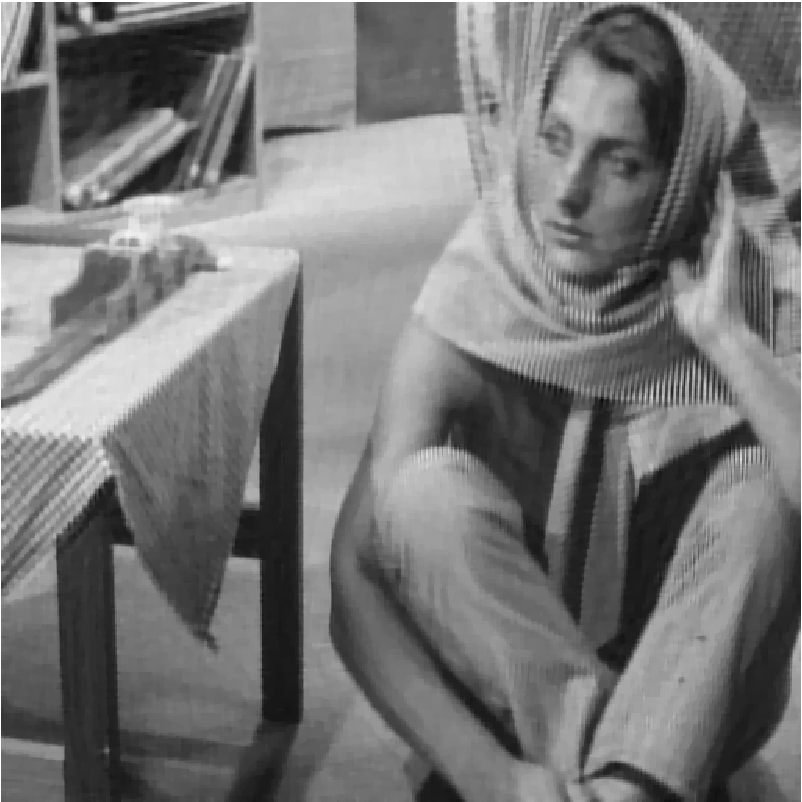}&
\includegraphics[width=0.3\textwidth]{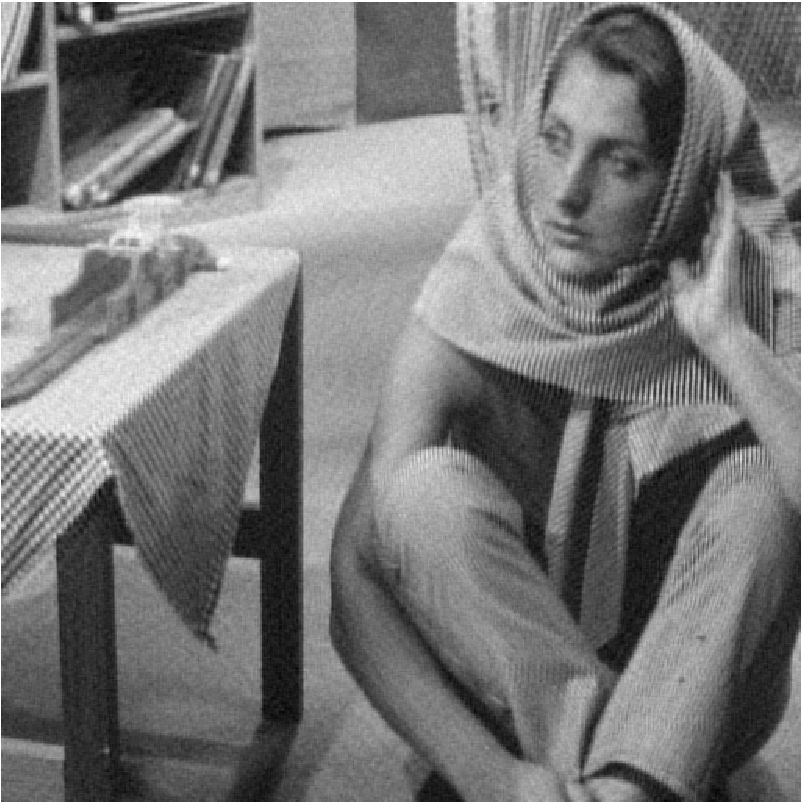}\\
\includegraphics[width=0.3\textwidth]{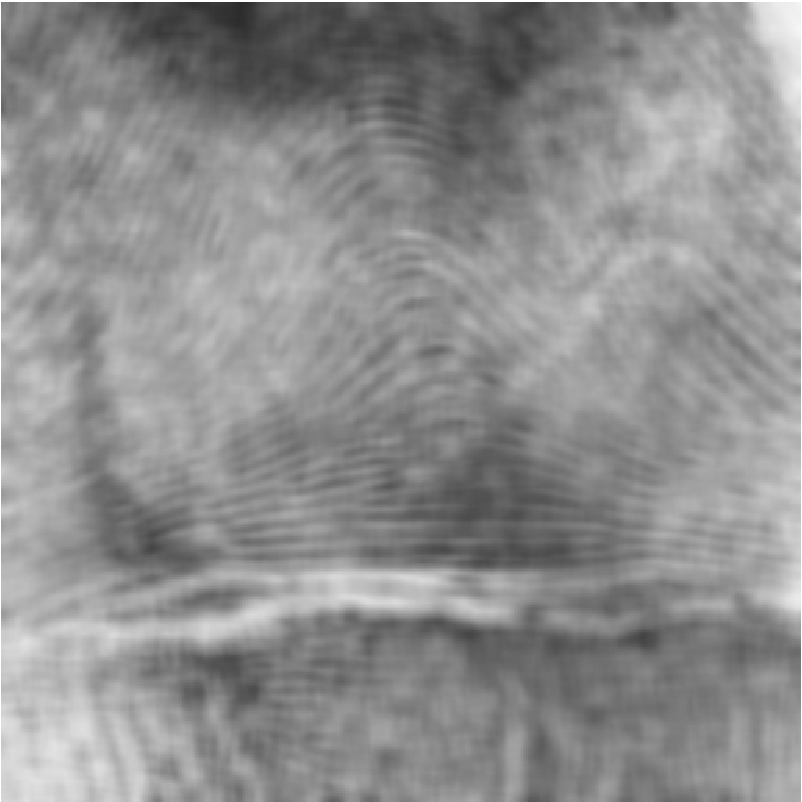}&
\includegraphics[width=0.3\textwidth]{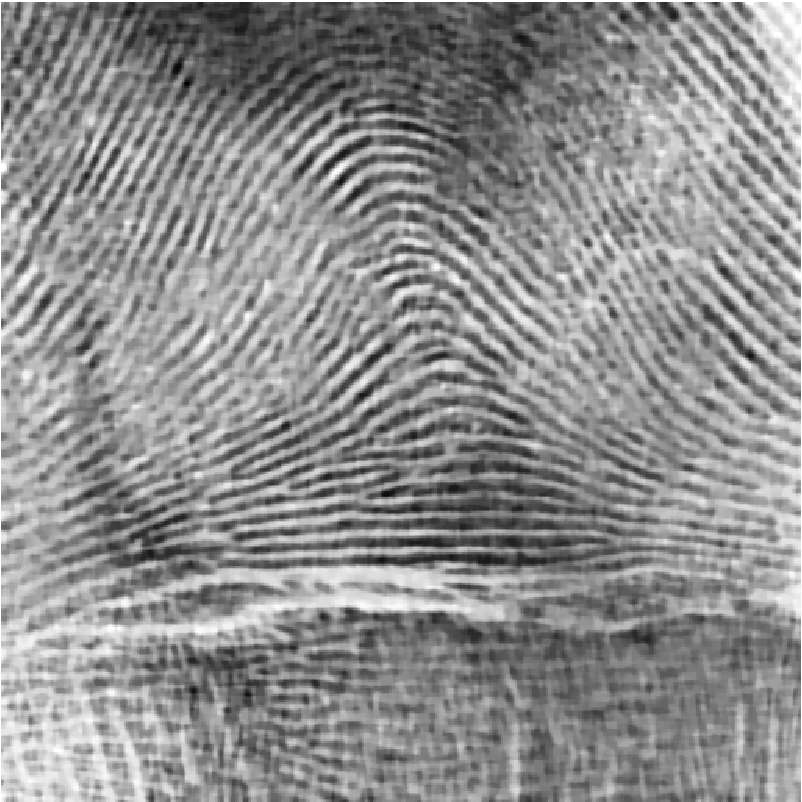}&
\includegraphics[width=0.3\textwidth]{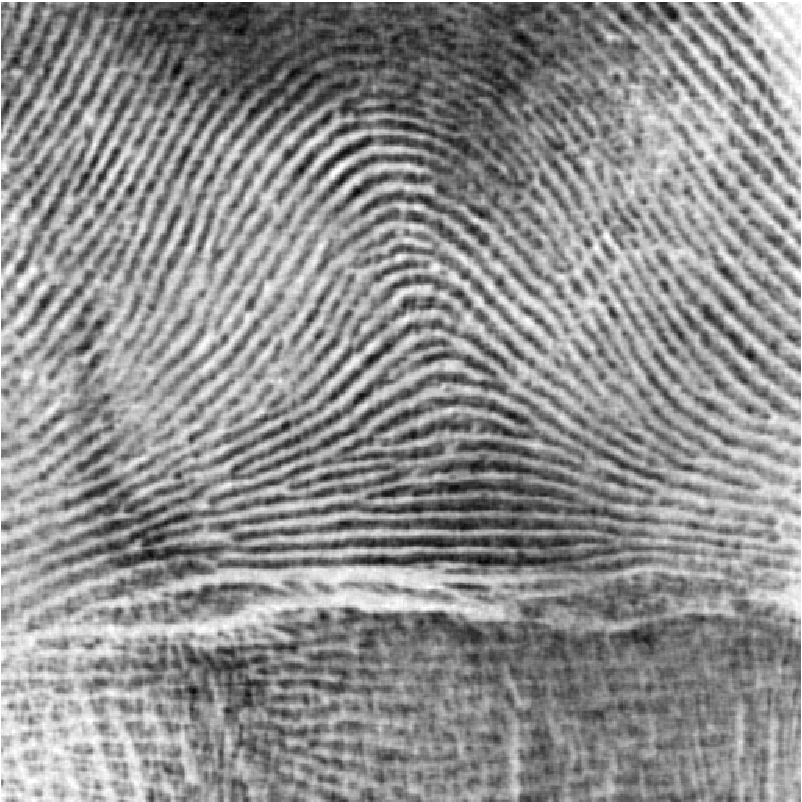}\\
(a)&(b)&(c)
\end{tabular}
% }
\caption{Quantitative comparisons of deblurring results: (a) degradation with noise, (b) restoration using the TV-PAPC method, and (c) restoration using the h2-ADMM method.}
\label{fig:deblur_noise}
\end{figure}

\section{Conclusion}
This study introduced an advanced approach for integrating the proximity operators of $(\ell_1/\ell_2)^2$ and $\ell_1/\ell_2$ into the frameworks of APG and ADMM, enhancing the efficacy of sparse signal and image recovery methods. Our detailed exploration of the mathematical properties of these two scale and signed permutation invariant sparsity-promoting functions, particularly their behavior under the proximity operator, has provided new insights into their potential to improve recovery accuracy and computational efficiency in sparse signal recovery and image restoration. The theoretical discussions on convergence properties reinforce the robustness and reliability of the proposed methods. Future work will focus on extending the application of these techniques to broader areas of medical image processing and machine learning where sparsity is a critical component. Additionally, exploring the integration of other scale and signed permutation invariant functions into recovery algorithms could uncover new pathways to further enhance performance.

\section*{Acknowledgement}
This research was supported by the National Science Foundation under grant DMS-2208385.

\bibliographystyle{siam}
\bibliography{shen_jia_2023.bib}

\end{document}